\theoremstyle{plain}
\newtheorem{theorem}{Theorem}[section]
\newtheorem{lemma}[theorem]{Lemma}
\newtheorem{corollary}[theorem]{Corollary}
\theoremstyle{definition}
\newtheorem{definition}[theorem]{Definition}
\newtheorem{example}[theorem]{Example}
\newtheorem{prop}[theorem]{Proposition}
\newtheorem{remark}{Remark}[section]
\newcommand{\Z}{\mathbb{Z}} 
\newcommand{\R}{\mathbb{R}}
\begin{document}

\title{Generalized K\"ahler manifolds via mapping tori}

\author{Beatrice Brienza}
\address[Beatrice Brienza]{Dipartimento di Matematica ``G. Peano'', Universit\`{a} degli studi di Torino \\
Via Carlo Alberto 10\\
10123 Torino, Italy,}
\email[Beatrice Brienza]{beatrice.brienza@unito.it}

\author{Anna Fino}
\address[Anna Fino]{Dipartimento di Matematica ``G. Peano'', Universit\`{a} degli studi di Torino \\
Via Carlo Alberto 10\\
10123 Torino, Italy,
\& Department of Mathematics and Statistics, Florida International University\\
Miami, FL 33199, United States,}
\email[Anna Fino]{annamaria.fino@unito.it, afino@fiu.edu}

\keywords{Generalized K\"ahler, mapping torus, Formal}

\subjclass[2010]{53D18, 53C55, 55P62}

\begin{abstract}

Starting from the product of  a  $3$-torus and a  compact K\"ahler (respectively,  hyperK\"ahler) manifold we construct via mapping tori   generalized K\"ahler manifolds  of split (respectively,  non-split) type. 
In this way we  obtain  new non-K\"ahler  examples and we recover   the known examples of generalized K\"ahler solvmanifolds.  Moreover,   we  investigate the formality and the  Dolbeault cohomology of the generalized K\"ahler mapping tori. 
\end{abstract}

\maketitle

\section{Introduction}

Generalized Kähler structures were introduced and studied by M.  Gualtieri \cite{Gualtieri}  in the context of N. Hitchin’s generalized complex geometry \cite{Hitchin}. From the physics point of view, 
 they  are the general solution to the $(2, 2)$ supersymmetric sigma model \cite{GHR}.
 
A {\em generalized K\"ahler structure}  on a $2n$-dimensional manifold $M$ is  given by a pair of commuting complex structures $({\mathcal J}_1 ,  {\mathcal J}_2)$ on the vector bundle $T M \oplus T^* M$, 
which are integrable with respect to the (twisted) Courant bracket on $TM \oplus T^*M$, are compatible with the natural inner product  $\langle  \cdot, \cdot  \rangle$ of signature $(2n, 2n)$ on $T M \oplus T^*M$  and such that   $\langle  {\mathcal J}_1  \cdot,  \mathcal J_2 \cdot  \rangle$ is positive definite. A manifold $M$ endowed with such a pair  $({\mathcal J}_1 ,  {\mathcal J}_2)$ is called \emph{generalized K\"ahler manifold}.

By  \cite{Gualtieri, AG} a generalized K\"ahler structure on a  $2n$-dimensional  manifold  $M$  can be also described as a triple $(g,J_\pm)$ where $g$ is a Riemannian metric on $M$ and $J_\pm$ is a pair of complex structures compatible with $g$ and such that the following occurs
\[
  d^c_+\omega_+=-d^c_-\omega_-=H,
\]
where $H$ is a closed $3$-form on $M$, $\omega_\pm$ are the fundamental forms of $(g,J_\pm)$ respectively and $d^c_\pm$ are the real Dolbeault operator associated to the complex structures $J_\pm$. In particular, any K\"ahler metric $g$  on a complex manifold $(M, J)$ gives rise to a trivial generalized K\"ahler structure by taking $J_+ = J$ and  $J_- = \pm J$.
The $3$-form $H$  is also called the torsion form of the generalized K\"ahler structure  and it can be identified with the torsion of the Bismut (or Strominger) connection associated with the Hermitian structure $(J_{\pm}, g)$ (\cite{Bismut, Gaud}).  Clearly, if $H\equiv 0$, then the underlying manifold is K\"ahler.

In \cite{Hitchin} N. Hitchin  proved that if a complex manifold $(M, J)$ has a generalized Kähler structure $(J_{\pm}, g)$  such that $J = J_+$ and $J_{\pm}$ do not commute, then the commutator $[J_+, J_-]$  defines a holomorphic Poisson structure  on $(M, J)$  and in this case, the generalized K\"hler structure is called non-split. 
If the complex structures $J_+$ and $J_-$ commute, the generalized Kähler structure is said to be split since $Q = J_+ J_-$ is an involution of the tangent bundle  $T M$, which splits as a direct sum of the $(\pm 1)$-eigenspaces
 of  $Q$ (\cite{AG}).

A generalized K\"ahler structure on a smooth manifold $M$ is said to be \emph{split} if $[J_+,J_-]=0$ and \emph{non-split} otherwise. 
A split generalized K\"ahler structure such that $J_+ \neq \pm J_-$ is  also called of \emph{ambi-hermitian type}.

Complex surfaces  admitting generalized K\"ahler structures   of split type have been classified in \cite{AG}. There are many explicit constructions of non-trivial generalized K\"ahler structures, e.g., \cite{ADE,AGG, AG, BM, CG, DM, FP21, FP22, FT}. In particular,  non-K\"ahler  examples are given by   (compact) solvmanifolds, in contrast with the case of (compact) nilmanifolds which cannot admit any invariant generalized K\"ahler structures unless they are tori  \cite{Cavalcanti}.

 K\"ahler manifolds have very restrictive topological properties,  like for instance  they have even odd Betti numbers, satisfy the strong Lefschetz property and are formal in the sense  of Rational Homotopy Theory \cite{Sullivan, DGMS}.
 However very little is known in general about the differential topology of generalized K\"ahler manifolds.

Since  solvmanifolds  admit a closed non-vanishing $1$-form,   they can  be described as mapping tori   (\cite{Tischler}).
 Recall that  given a diffeomorphism  $f$ of  a  manifold $M$, the \emph{mapping torus} $M_f$ of $M$ is the quotient
\[
   M_f=\frac{M\times[0,1]}{(x,0)\sim (f(x),1)}.
\]
The manifold $M_f$  can be also viewed  as the quotient of $M \times \mathbb{R} $ with respect to the $\mathbb{Z}$ action $n\cdot(x,t)\mapsto (f^n(x),t+n)$ and  one has the natural the fibration $\pi: M_f \to S^1, \ [(x,t)] \mapsto e^{2\pi i t}$. \\
So a natural question is whether there exists a way to construct new examples of generalized K\"ahler manifolds via mapping tori.

In  the paper we construct  new generalized K\"ahler  mapping tori starting from the product  $M = {\mathbb T}^3 \times N$ of  a $3$-torus $\mathbb{T}^3$,  endowed with a $1$-parameter family of normal almost contact metric structures, and  a compact  K\"ahler (respectively, hyperK\"ahler) manifold $N$. We fix on the $3$-torus a family of basis $\{e^i(t)\}$ of $1$-forms depending on a time variable $t$ running over $[0,1]$ and obeying certain conditions.  We prove that,  if $f$ is a block map $(\rho, \psi)$, where $\rho$ is a diffeomorphism of $\mathbb T^3$  preserving the basis $\{e^i(t)\}$ and $\psi$ is a holomorphic automorphism of $N$ preserving the K\"ahler (respectively, hyperK\"ahler) structure, then the mapping torus $M_f$ is endowed with a split (respectively, non-split) generalized K\"ahler structure $(g,I_\pm)$ (Theorems \ref{lemma:2} and \ref{lemma:1}). 

In Section  \ref{sectformality} we prove that   the  generalized K\"ahler manifolds   $M_f$  have odd first Betti number  (Proposition \ref{proposition:nokahler}) and some results about the formality.
In particular, we show that, if the diffeomorphism  $\psi$  of $N$  is the identity map,    then $M_f$ is simply the product of $ N \times \mathbb{T}^3_\rho $, where $\mathbb{T}^3_\rho$ is the mapping torus of $\mathbb{T}^3$ with respect to the diffeomorphism $\rho$ and  therefore  is formal (Corollary \ref{corollary:cor1}). We also give a more explicit description of $\mathbb{T}^3_\rho$, which is actually biholomorphic to an Inoue surface in the family $S_M$ (Lemma \ref{lemma:splitmt}).

In Theorem \ref{theorem:holfib} we prove  that  the generalized K\"ahler manifolds $(M_f, I_\pm, g)$  constructed in Theorems  \ref{lemma:2} and \ref{lemma:1} are the total space of a holomorphic fibration over the  Inoue surface  $\mathbb{T}^3_\rho$ with compact K\"ahler (respectively, hyperK\"ahler) fibre $N$. Therefore the Dolbeault cohomology of $(M_f, I_\pm)$  can be computed using  the  two Borel spectral sequences associated to the holomorphic fibration. \\
In the last section  we give  some explicit examples of the constructions described in Theorems \ref{lemma:2} and \ref{lemma:1}. More precisely,  considering  as   hyperK\"ahler manifold  $N$ a $4$-torus, we recover the known examples of generalized K\"ahler solvmanifolds.  Moreover, we show that examples not diffeomorphic to solvmanifolds can be constructed by taking  as the (hyper)K\"ahler manifold $N$  a $K3$ surface.

\section{Formality of mapping torus}

We recall some basic facts of the theory of minimal models and formality  (\cite{DGMS}).

\begin{definition}
A \emph{commutative differential graded algebra}  (CDGA for short) is a graded algebra $A=\oplus_k A^k$ which is graded commutative, together with a differential $d: A^k \to A^{k+1}$ satisfying the Liebinitz rule and such that $d^2=0.$\\
A morphism of CDGA is a degree-preserving linear map, which preserves the multiplications of the underlying algebras and commutes with the differentials.
\end{definition}
Observe that from a CDGA $(A,d)$ one can always construct its cohomology algebra $H^*(A,d)$ which can be turned into a CDGA once endowed with the zero differential.\\
A CDGA morphism is said to be a \emph{quasi isomorphism} if the induced map in cohomology is an isomorphism. \\
A CDGA $(A,d)$ such that $H^0(A,d)=\mathbb{R}$ is said connected. \\
The basic example of CDGAs is the complex of differential forms of a smooth manifold $M$ endowed with the exterior derivative, which we will denote by $(\Omega^*(M),d)$. \\
\begin{definition}
A CDGA $(A,d)$ is said to be \emph{minimal} if the following occurs:
\begin{itemize}
    \item $A=\bigwedge V$ is the free commutative algebra generated by a graded real vector space $V$;
    \item $V$ is endowed with an homogeneous basis $\{x^i\}_{i \in I}$, where $I$ is a well ordered set, such that $\abs{x^i}\le \abs{x^j}$ for $i<j$ and $dx^i=\bigwedge \{x^j\}_{j<i}$.
\end{itemize}
\end{definition}
By  \cite{SH}
every connected CDGA $(A,d)$ has an unique minimal model, up to isomorphism. That is, a minimal algebra $(\bigwedge V,d)$ together with a quasi isomorphism $\varphi:(\bigwedge V,d)\to (A,d)$.

Moreover, by definition, the minimal model of a connected manifold is the minimal model of the CDGA $(\Omega^*(M),d)$. Hence, the minimal model of a smooth (connected) manifold allow us to encode all the (De Rham) cohomology of the manifold with a small amount of algebraic relations.
\begin{definition}
A minimal algebra $(\bigwedge V,d)$ is said to be \emph{formal} if there exists a CDGA morphism $\nu:(\bigwedge V,d)\to(H^*(\bigwedge V,d),0) $ inducing the identity in cohomology.
\end{definition}
A connected smooth manifold is said to be formal if its minimal model is formal.

\smallskip

In general, the computation of the minimal model of a mapping torus $M_f$  is not trivial, but some sufficient conditions for the formality are known.  By \cite[Lemma 12] {BFM}, the cohomology of  $M_f$  fits in an exact sequence
\[
 0 \to C^{r-1} \to H^r(M_f) \to K^r \to 0, 
\]
where $K^r=\ker(f_r^*-Id)$ and $C^{r-1}=\operatorname{coker}(f_{r-1}^*-Id)$. Moreover, since $K^r$ is free, then the exact sequence splits, i.e. 
\[
   H^r(M_f)=K^r \oplus  C^{r-1}.
\]

We recall two results about the formality of mapping tori and the construction of their minimal model. The result below is contained in \cite[Theorem 13, 15] {BFM}.
\begin{theorem} 
Let $M$ be an oriented compact smooth manifold of dimension $n$ and let $f:M \to M$ be an orientation preserving diffeomorphism. Let $M_f$ be the mapping torus of $f$. The following two results hold:
\begin{itemize}
    \item Suppose that for some $p > 0$ the isomorphism $f^*_p:H^p(M)\to H^p(M) $ has eigenvalue $\lambda=1$ with multiplicity $2$. Then $M_f$ is non-formal. \\
    \item Suppose that there is some $p \ge 2$ such that $f_k^*$ does not have eigenvalue $\lambda=1$ for any $k \le p-1 $, and that $f_p^*$ does have the eigenvalue $\lambda=1$ with some multiplicity $r \ge 1$. Denote
\[
    K^j=\ker(f_p^*-Id)^j
\]
for any $j=0,\dots,r$. So $\{0\}=K^0 \subset K^1 \subset \dots \subset K^r$. Write $G^j=K^j / K^{j-1}$, for $j=1,\dots,r$. The map $F=f_p^*-Id$ induces maps $F:G^j \to G^{j-1}$, $j=1,\dots,r$.\\
Then the minimal model of $M_f$ is, up to degree $p$, given by
\[
\begin{split}
    &V^1=\langle a \rangle, \ \ \ da=0, \\
    &V^k=0, \ \ \ k=2,\dots,p-2,\\
    &V^p=G^1 \oplus G^2\oplus \dots \oplus G^r, \ \ \ dv= a \cdot F(v), \ \ v\in G^j.
\end{split}
\]
Moreover, if $r \ge 2$, then $M_f$ is non-formal.
\end{itemize}
\end{theorem}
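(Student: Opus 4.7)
The plan is to combine the Wang exact sequence for the mapping torus fibration $M \hookrightarrow M_f \to S^1$ with a step-by-step Sullivan–Halperin–Stasheff construction of the minimal model by Hirsch extensions, reading off non-formality from surviving Massey products.

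First I would unpack the hypothesis on eigenvalues: for $k \leq p-1$ the map $f^*_k - \operatorname{Id}$ is invertible, so in the short exact sequence $0 \to C^{k-1} \to H^k(M_f) \to K^k \to 0$ both flanking terms vanish. Hence $H^k(M_f) = 0$ for $2 \leq k \leq p-1$, while $H^1(M_f) = \R$ with generator $a$ pulled back from a volume form on $S^1$. In degree $p$ the same sequence identifies $H^p(M_f) \cong K^1 = \ker(f^*_p - \operatorname{Id})$, the honest (geometric) $1$-eigenspace.

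Second, I would build $(\bigwedge V, d)$ inductively. In degree $1$ the generator $a$ with $da = 0$ realises $H^1$; the vanishing cohomology forces $V^k = 0$ for $2 \leq k \leq p-1$. In degree $p$ the generators are dictated by the generalized-eigenspace filtration $0 = K^0 \subset K^1 \subset \cdots \subset K^r$: one adjoins $\dim G^1 = \dim K^1$ closed generators matching $H^p(M_f)$, and for each $j = 2, \ldots, r$ one adjoins $\dim G^j$ generators with differential $dv = a \cdot F(v)$, whose role is to kill the spurious classes that $a \cdot V^p$ would otherwise produce in $H^{p+1}$ of the model. Comparing Poincaré series in the relevant bidegrees and invoking uniqueness of the minimal model identifies this construction with the minimal model of $M_f$ up to degree $p$.

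The main obstacle is the non-formality statements, which require exhibiting non-trivial Massey products. For the first bullet, a Jordan block of size $\geq 2$ for $\lambda = 1$ on $H^p(M)$ yields cocycles $u, v \in \Omega^p(M)$ with $f^* u = u$ and $f^* v - v = u$; on $M_f$ the form $w = v - \alpha \, u$, where $\alpha$ is a primitive of $a$ on the universal cover, is globally well-defined and satisfies $dw = -a \wedge u$, so $a \wedge u$ is exact on $M_f$ even though $a$ and $u$ are cohomologically non-trivial. This is precisely the data defining a non-trivial Massey product $\langle a, u, a \rangle$. For the second bullet, the minimal model constructed above contains, whenever $r \geq 2$, a generator $v \in G^r$ with $dv = a \cdot F(v)$, and iterating this along the Jordan chain yields a length-$r$ Massey product $\langle a, \ldots, a, F^{r-1}(v)\rangle$ that is non-zero modulo indeterminacy. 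By the Deligne–Griffiths–Morgan–Sullivan criterion this obstructs formality. The technical subtlety to verify carefully is the vanishing of indeterminacy of these Massey products, which follows from the sparsity of the cohomology of $M_f$ in degrees strictly below $p$.
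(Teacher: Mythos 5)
A preliminary remark: the paper itself contains no proof of this statement; it is quoted directly from \cite[Theorems 13 and 15]{BFM}, so the only benchmark is the argument there, which does follow your general outline (the exact sequence $0\to C^{r-1}\to H^r(M_f)\to K^r\to 0$, an explicit step-by-step construction of the model, and non-formality detected by a triple Massey product involving two copies of the circle class). Your structural analysis of $H^k(M_f)$ for $k\le p$ in the second bullet is correct, and you rightly read ``multiplicity $2$'' as a Jordan block of size $\ge 2$ (the literal reading is false: $f=\mathrm{Id}$ on $\mathbb{T}^2$ gives $M_f=\mathbb{T}^3$, which is formal).

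There are, however, two genuine gaps. First, ``comparing Poincar\'e series and invoking uniqueness of the minimal model'' does not identify your candidate $(\bigwedge V,d)$ with the minimal model of $M_f$: uniqueness applies only after you exhibit a morphism $\varphi:(\bigwedge V,d)\to(\Omega^*(M_f),d)$ that is an isomorphism on $H^k$ for $k\le p$ and injective on $H^{p+1}$. Defining $\varphi$ on the non-closed generators means producing honest forms $\tilde v$ on $M_f$ with $d\tilde v=\theta\wedge\widetilde{F(v)}$, and that is the core of the proof. Your candidate $w=v-\alpha\,u$ is the right germ of this, but as written it is not well defined on $M_f$: the relations $f^*u=u$ and $f^*v=v+u$ hold only in cohomology, so $\Phi^*w=w+(\text{exact})$ under the deck transformation $\Phi(x,t)=(f(x),t+1)$, and $w$ must be corrected by lower-order terms before it descends. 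Second, for both non-formality claims you only verify that the relevant Massey products are \emph{defined} (i.e.\ that $a\wedge u$ is exact); you never compute them or show they are non-zero modulo indeterminacy, which is the actual content. Moreover, the indeterminacy argument you invoke (``sparsity of the cohomology of $M_f$ below degree $p$'') is unavailable in the first bullet, where no hypothesis is made on $f_k^*$ for $k<p$, so $H^2(M_f)\cdot[\tilde u]$ need not vanish. The missing step is to observe that $\langle[\theta],[\theta],[\tilde u]\rangle$ is represented by $\pm[\theta\wedge w]$, whose component in $C^p=\operatorname{coker}(f_p^*-\mathrm{Id})\subset H^{p+1}(M_f)$ is the class of $[v]$, and that $[v]$ lies neither in $\operatorname{im}(f_p^*-\mathrm{Id})$ nor in the indeterminacy subspace. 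For the ``moreover'' in the second bullet, once the minimal model is actually established, no Massey product on $M_f$ is needed: for $0\ne v\in G^r$ with $r\ge2$, the element $a\cdot v$ is closed, lies in the ideal generated by the non-closed generators, and is not exact (since $v\notin\operatorname{im}F$), which violates the standard algebraic formality criterion directly.
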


\section{Split generalized K\"ahler  mapping tori} \label{section3}

Complex surfaces admitting a generalized K\"ahler  structure of split type have been classified in \cite{AG}. The ones   topologically equivalent to $3$-torus bundles over $S^1$ and having odd first Betti number are biholomorphic to Inoue surfaces in the family $S_M$. 
In the next Example, we  describe  an example of Inoue surface in the family $S_M$ via mapping tori.  \\

\begin{example} \label{example:basis} 
Let $p$ and $t_0$ be a pair of non-zero real numbers such that the following matrix
\begin{equation} \label{eqn:rho}
  \rho(t_0)= \begin{pmatrix}
              e^{t_0} & 0 & 0 \\
              0 & e^{-\frac{t_0}{2}}\cos(t_0p) &  e^{-\frac{t_0}{2}}\sin(t_0p) \\
              0 & -e^{-\frac{t_0}{2}}\sin(t_0p) & e^{-\frac{t_0}{2}}\cos(t_0p) \\
              \end{pmatrix}  
\end{equation}
is similar to an integer matrix, say $A$. The existence of such a pair is proved in \cite[Section 3.2.2]{AO}. Hence, there exists an invertible matrix $P$ such that $PA=\rho(t_0)P$. A lattice $\Gamma_0$ in $\mathbb{R}^3$ defined by
 \[
  \Gamma_0=P(m_1,m_2,m_3)^t, 
 \]
where $m_1,m_2,m_3 \in \Z$ and $(m_1,m_2,m_3)^t$ is the transpose of the vector $(m_1,m_2,m_3)$, is invariant under $\rho(t_0)$. Thus $\Gamma_0$ is a cocompact subgroup of $\mathbb{R}^3$ isomorphic to $\mathbb{Z}^3$. \\
Since $\rho(t_0)$ preserves $\Gamma_0$, we may regard $\rho(t_0)$ as a diffeomorphism of  the $3$-torus $\mathbb{T}^3= \Gamma_0  \backslash \mathbb{R}^3$. From now on we  will denote $\rho(t_0)$ simply by $\rho$. \\ 
For any $t\in[0,t_0]$,  fix the following frame of $\mathbb{T}^3$:
$$
e_1(t)=e^t \frac{\partial}{\partial x^1}, \, 
e_2(t)=e^{-\frac{t}{2}} \cos(pt) \frac{\partial}{\partial x^2}-e^{-\frac{t}{2}} \sin(pt) \frac{\partial}{\partial x^3},\,
e_3(t)=e^{-\frac{t}{2}} \sin(pt) \frac{\partial}{\partial x^2}+e^{-\frac{t}{2}} \cos(pt) \frac{\partial}{\partial x^3}, 
$$
with dual coframe 
$$ 
e^1(t)=e^{-t} dx^1, \,
e^2(t)=e^{\frac{t}{2}} \cos(pt) dx^2-e^{\frac{t}{2}} \sin(pt) dx^3,\,
e^3(t)=e^{\frac{t}{2}} \sin(pt) dx^2+e^{\frac{t}{2}} \cos(pt) dx^3.
$$
For any $t$ we can define the following pair of $1$-parameter family of normal almost contact metric structures $(\xi(t),  \eta(t), \phi_{\pm}(t), h(t))$,  given by 
\[
\begin{split}
    &\xi (t) :=e_1 (t), \quad \eta (t):=e^1(t), \quad h(t) :=\sum_{i=1}^3 (e^i (t))^{2},\\
    &\phi_\pm (t) (e_1 (t)):=0, \quad \phi_\pm (t) (e_2 (t)):=\pm e_3 (t), \quad \phi_\pm (t) (e_2 (t)):=\mp e_3 (t),
\end{split}
\]
with fundamental forms $$F_\pm(t)=\pm e^2 (t) \wedge e^3 (t) =\pm e^t \ dx^2 \wedge dx^3.$$
Note that 
\[
\begin{split}
& \rho^*\big(e^{1}(t_0)_{\rho(\underline{x})}\big)= \rho^*\big(e^{-t_0} dx^1\big)=dx^1=e^1(0)_{\underline{x}},\\
& \rho^*\big(e^{2}(t_0)_ {\rho(\underline{x}}\big)=\rho^*\big(e^{\frac{t_0}{2}} \cos(pt_0) dx^2-e^{\frac{t_0}{2}} \sin(pt_0) dx^3 \big)= dx^2 =e^2(0)_{\underline{x}}, \\
& \rho^*\big(e^{3}(t_0)_ {\rho(\underline{x}}\big)=\rho^*\big(e^{\frac{t_0}{2}} \sin(pt_0) dx^2+e^{\frac{t_0}{2}} \cos(pt_0) dx^3 \big)= dx^3 =e^3(0)_{\underline{x}}.
\end{split}
\]
From $\rho^*\big(e^{i}(t_0)_{\rho(\underline{x})}\big)= e^i(0)_{\underline{x}}$, we also have
\[
 \rho_{*}\big( e_i(0)_{\underline{x}}\big)= e_i(t_0)_{\rho(\underline{x})}, \quad   i=1,2,3. 
 \]
If we regard $e^i$ as a $1$-form on $\mathbb{T}^3 \times [0,t_0]$, then the computation above shows that $e^i$ is preserved by the smooth map 
$\rho: \mathbb{T}^3 \times \{0\} \to \mathbb{T}^3 \times \{t_0\}$ sending $(\underline{x},0) \mapsto (\rho(\underline{x}),t_0)$, for each $i=1,2,3$. Hence $\{e^i\}$ descends to a coframe  on the mapping torus 
\[
   \mathbb{T}^3_\rho= \frac{\mathbb{T}^3 \times [0,t_0]}{(\underline{x},0) \sim (\rho(\underline{x}),t_0) }.
\]
For the same reason,   $\{ e_i \} $ induces  a basis of  vector fields on the mapping torus.

Let $$
\pi: \mathbb{T}^3_\rho \to S^1,    [(\underline{x},t)] \mapsto e^{2\pi i \frac{t}{t_0}}
$$
and  $\theta$ be the pullback of the standard volume form on $S^1$ via $\pi$. Note that, up to rescaling,  $\theta=dt$ locally. By previous remarks, $\{e^i, \theta\}$ and $\{e_i, \frac{\partial}{\partial \theta}\}$ are a global coframe and frame of $\mathbb{T}^3_\rho$ respectively, where $\frac{\partial}{\partial \theta}$ is the vector field 
 whose local expression is $\frac{\partial}{\partial t}$. 
 
The triple $(J_{\pm},  g)$ 
on $\mathbb{T}^3_\rho$ given by 
\[
J_\pm(e_1)=\frac{\partial}{\partial \theta}, \quad  J_\pm(e_2)=\phi_\pm (e_2), \quad  g=\sum_{i=1}^3 (e^i)^2 + \theta^2,
\]
with fundamental forms $\omega_\pm= e^1 \wedge \theta +F_\pm$, defines  a generalized K\"ahler structure of split type.

Indeed,  $N_{J_{\pm}}(e_2,e_3)=0$,  
$$
\begin{array}{lcl}
N_{J_\pm}(e_1,e_2)&=& \big[ \frac{\partial}{\partial t}, J_\pm e_2\big]-J_\pm\big(\big[ \frac{\partial}{\partial t}, e_2\big]\big) \\[4pt]
&=& \pm \big[ \frac{\partial}{\partial t}, e_3\big]-J_\pm\big(\big[ \frac{\partial}{\partial t}, e_2\big]\big)\\[4pt]
&=& \pm (p e_2 -\frac{1}{2} e_3)-J_\pm (-\frac{1}{2} e_2-p e_3)=0
\end{array}
$$
and similarly 
$$
\begin{array}{lcl}
N_{J_\pm}(e_1,e_3)&=&=\big[ \frac{\partial}{\partial t}, J_\pm e_3\big]-J_\pm\big(\big[ \frac{\partial}{\partial t}, e_3\big]\big)\\
&=&\mp \big[ \frac{\partial}{\partial t}, e_2\big]-J_\pm\big(\big[ \frac{\partial}{\partial t}, e_3\big]\big)\\
&=& \mp \big[ \frac{\partial}{\partial t}, e_2\big] \pm \big[ \frac{\partial}{\partial t}, e_2 \big]=0.
\end{array}
$$
Moreover
$$
  d^c_\pm \omega_\pm =J_\pm^{-1} d\omega_\pm
 =J_\pm^{-1} dF_\pm =\pm J_\pm^{-1} (\theta \wedge e^2 \wedge e^3) =\\
 =\mp e^1 \wedge e^2 \wedge e^3= \mp dx^1 \wedge dx^2 \wedge dx^3
$$
 is a closed $3$-form on $\mathbb{T}^3_\rho$ and 
 $[J_+,J_-]=0$. 
The first Betti number $b_1$ of $\mathbb{T}^3_\rho$ is given by 
\[
 b_1(\mathbb{T}^3_\rho)= \dim(\ker(\rho_1^*-Id))+\dim(\operatorname{coker}(\rho_0^*-Id))=\dim(\ker(\rho_1^*-Id))+1,
\]
where $\rho_1^*$ is the isomorphism induced by $\rho$ on $H^1(\mathbb{T}^3)$. With respect to  the standard basis $\{[dx^1],[dx^2],[dx^3]\}$ of $H^1(\mathbb{T}^3)$  we have 
\[
 \rho_1^*-Id=
 \begin{pmatrix}
     e^{t_0}-1 & 0 & 0 \\
     0 & e^{\frac{-t_0}{2}}\cos(t_0p)-1 & e^{\frac{-t_0}{2}}\sin(t_0p) \\
     0 & -e^{\frac{-t_0}{2}}\sin(t_0p) & e^{\frac{-t_0}{2}}\cos(t_0p)-1 \\
 \end{pmatrix}
\]
and so for any values of $p$ in $\mathbb{R}\setminus \{0\}$ $\det(\rho_1^*-Id)=(e^{t_0}-1)(e^{-t_0}-2e^{\frac{-t_0}{2}}\cos(t_0p)+1)$ vanishes  only if $t_0=0$, which is an excluded value by hypothesis. It follows that   $b_1(\mathbb{T}^3_\rho)=1$ and as a  consequence  $\mathbb{T}^3_\rho$ is an Inoue surface in the family $S_M$. 

\end{example} 

\begin{remark} \label{remark:solvmanifold} The mapping torus  $\mathbb{T}^3_\rho$  can be also described  as  the  compact  almost abelian  solvmanifold $\Gamma \backslash (\mathbb{R}^3 \rtimes_\varphi \mathbb{R})$, where  $\varphi (t)$ is given by 
\begin{equation*}
   \varphi(t)= \begin{pmatrix} 
    e^t & 0 & 0  \\
    0 & e^{-\frac{t}{2}} \cos(tp)& e^{-\frac{t}{2}} \sin(tp) \\
    0 & -e^{-\frac{t}{2}} \sin(tp) & e^{-\frac{t}{2}} \cos(tp)\\
    \end{pmatrix},
\end{equation*}
and $\Gamma= \Gamma_0 \rtimes t_0 \mathbb{Z}$.  Note that   the almost abelian Lie group $\mathbb{R}^3 \rtimes_\varphi \mathbb{R}$  has  structure equations 
 \begin{equation} \label{eqn:maurer}
    \begin{split}
        & df^1=f^1 \wedge f^{4} \ , \  df^2= -\frac{1}{2} f^2 \wedge f^{4} + p f^3 \wedge f^{4n},\\
        &df^3=-p f^2\wedge f^{4} -\frac{1}{2}f^3 \wedge f^{4}, \ df^{4}=0.
    \end{split}
\end{equation} 
and  $e^1=f^1$,\ $e^2= f^2$, \  $e^3=f^3$,\ $\theta=f^4$.
\end{remark}

We can  now show that the previous example is a particular case of a more general construction. 

Let $a_1 = a_1(t), b_2= b_2 (t), b_3 = b_3 (t),$ be  real smooth functions  depending on a time-variable $t \in [0,1]$ such that,  for every fixed  $t$,   the following vector fields  on the real $3$-torus $\mathbb{T}^3$ 
\begin{equation}  \label{fvectors(t)}
\begin{split}
    &e_1 (t)=a_1(t) \frac{\partial}{\partial x^1}, \ \  \ \  e_2(t)=b_2(t) \frac{\partial}{\partial x^2}+b_3(t) \frac{\partial}{\partial x^3}, \\
    &e_3(t)=-b_3(t) \frac{\partial}{\partial x^2}+b_2(t) \frac{\partial}{\partial x^3},
\end{split}
\end{equation}
form  a basis of  vector fields, where $(x^1,x^2,x^3)$ are local coordinates on $\mathbb{T}^3$ regarded as the quotient manifold $\Z^3 \backslash \R^3$. Then the dual basis 
\begin{equation}  \label{formei(t)}
\begin{split}
    &e^1 (t)= \frac{1}{a_1(t)} dx^1, \ \ \ \ e^2(t)=\frac{b_2(t)}{l(t)}dx^2+ \frac{b_3(t)}{l(t)} dx^3, \\
    &e^3(t)=-\frac{b_3(t)}{l(t)}dx^2+ \frac{b_2(t)}{l(t)} dx^3,
\end{split}
\end{equation}
 is, for every fixed $t$,  a basis of $1$-forms on  $\mathbb{T}^3$, where   $l(t) = b_2^{2}(t)+b_3^{2}(t)  > 0$.\\
 If we define 
 \begin{equation} \label{eqn:v}
 v(t):=b_2 (t)b_2'(t)+b_3(t)b_3'(t) = \frac 12 l'(t)
 \end{equation}
 and 
 \begin{equation*}
 w(t):=b_2 (t)b_3'(t)-b_3(t)b_2'(t),
 \end{equation*}
 we also require that 
  \begin{align} 
     & \frac{v(1)}{l(1)} =  \frac{v(0)}{l(0)}, \label{eqn:1} \\
     &  \frac{w(1)}{l(1)} =  \frac{w(0)}{l(0)}. \label{eqn:2}
 \end{align}
 Moreover,  we can construct on $\mathbb{T}^3$  the following pair of  $1$-parameter  family of normal almost contact metric structures $(\xi(t),  \eta(t), \phi_{\pm}(t), h(t))$ given by 
\[
\begin{split}
    &\xi (t) :=e_1 (t), \quad \eta (t):=e^1(t), \quad h(t) :=\sum_{i=1}^3 (e^i (t))^{2},\\
    &\phi_\pm (t) (e_1 (t)):=0, \quad \phi_\pm (t) (e_2 (t)):=\pm e_3 (t), \quad \phi_\pm (t) (e_2 (t)):=\mp e_3 (t),
\end{split}
\]
with fundamental forms $$F_\pm(t)=\pm e^2 (t) \wedge e^3 (t) =\pm \frac{1}{l(t)} \ dx^2 \wedge dx^3.$$
Starting from the basis $\{e_i\}$  given in \eqref{fvectors(t)} with coefficients $a_1,b_2,b_3$ satisfying \eqref{eqn:1} and  \eqref{eqn:2}, we  can  construct a split-generalized K\"ahler mapping torus of the $3$-torus, which is either  a K\"ahler surface  or an Inoue surface in the family $S_M$, depending on the function $v$ described above is actually zero or not. In this paper we are interested in the second case.
\begin{lemma} \label{lemma:splitmt}
Let $\{e^i (t)\}$ be the basis of  $1$-forms  on $\mathbb T^3$  as in \eqref{formei(t)} with coefficients $a_1,b_2,b_3$ satisfying \eqref{eqn:1} and  \eqref{eqn:2}.
Suppose that there exist  a diffeomorphism $\rho$  of  $\mathbb{T}^3$  such that 
\begin{equation}  \label{eqn:torus}
\begin{array}{l}
    \rho^* \big(e^{i} (1)_{\rho(\underline{x})}\big)= e^{i}(0)_{\underline{x}},  \\[5pt] 
     \big(\frac{1}{l(t)}\big)' \cdot \frac{1}{a_1}= const \neq 0.
\end{array}
\end{equation}
Then  the mapping torus $\mathbb{T}^3_\rho$ 
admits a split generalized K\"ahler structure and it is biholomorphic to an Inoue surface in the family $S_M$.
\end{lemma}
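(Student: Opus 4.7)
The plan is to mimic, in the present generality, the construction carried out in Example \ref{example:basis}. First I would use the first condition in \eqref{eqn:torus}, namely $\rho^*(e^i(1)_{\rho(\underline x)}) = e^i(0)_{\underline x}$, to see that the $1$-forms $e^i(t)$ and the dual vector fields $e_i(t)$ descend to a global coframe $\{e^1, e^2, e^3, \theta\}$ and frame $\{e_1, e_2, e_3, \partial/\partial \theta\}$ on $\mathbb{T}^3_\rho$, where $\theta$ is the pullback of the standard volume form on $S^1$. With respect to this parallelization I extend the almost contact pair $\phi_\pm(t)$ to a pair of almost complex structures and an associated metric
\[
J_\pm(e_1) = \frac{\partial}{\partial\theta}, \quad J_\pm(e_2) = \pm e_3, \quad g = \sum_{i=1}^{3}(e^i)^{2} + \theta^{2},
\]
with fundamental forms $\omega_\pm = e^1 \wedge \theta + F_\pm$, exactly as in Example \ref{example:basis}.

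Next I verify that $(g, J_\pm)$ is a split generalized K\"ahler structure. The split condition $[J_+, J_-] = 0$ is immediate, since $J_+$ and $J_-$ coincide on the span of $e_1, \partial/\partial\theta$ and differ only by a sign on $\langle e_2, e_3\rangle$. For integrability, using the explicit form \eqref{fvectors(t)} of $e_1, e_2, e_3$ in terms of $a_1, b_2, b_3$, a short computation yields
\[
\left[\frac{\partial}{\partial\theta}, e_1\right] = \frac{a_1'}{a_1}\, e_1, \quad \left[\frac{\partial}{\partial\theta}, e_2\right] = \frac{v}{l}\, e_2 + \frac{w}{l}\, e_3, \quad \left[\frac{\partial}{\partial\theta}, e_3\right] = -\frac{w}{l}\, e_2 + \frac{v}{l}\, e_3,
\]
while $[e_i,e_j]=0$ for $i,j\in\{1,2,3\}$. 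Conditions \eqref{eqn:1} and \eqref{eqn:2} are precisely what is needed for the coefficients $v/l$ and $w/l$ to descend to functions on $\mathbb{T}^3_\rho$, so that the above brackets are globally well-defined. A direct computation of $N_{J_\pm}$ on the frame, in the same spirit as in Example \ref{example:basis}, then gives $N_{J_\pm}\equiv 0$.

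The generalized K\"ahler identity $d^c_+\omega_+ = -d^c_-\omega_-$ is handled by noting that $e^1$ and $\theta$ depend only on $t$, so $d(e^1\wedge\theta)=0$ and
\[
d\omega_\pm = dF_\pm = \pm\left(\frac{1}{l(t)}\right)'\, dt\wedge dx^2\wedge dx^3.
\]
Applying $J_\pm^{-1}$ (which on the forms $\theta, e^2, e^3$ acts exactly as in the Example) and rewriting the result in the coordinate basis yields
\[
d^c_\pm \omega_\pm = \mp\, \frac{(1/l)'}{a_1}\, dx^1 \wedge dx^2 \wedge dx^3.
\]
Hence the second part of \eqref{eqn:torus}, $(1/l)'/a_1 \equiv C \neq 0$, is precisely the condition that makes $d^c_\pm \omega_\pm$ a nonzero closed $3$-form satisfying $d^c_+\omega_+ = -d^c_-\omega_-$.

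For the biholomorphism with an Inoue surface in the family $S_M$, the plan is to use the exact sequence recalled in Section \ref{sectformality} to compute
$b_1(\mathbb{T}^3_\rho) = \dim\ker(\rho_1^* - \mathrm{Id}) + 1$, show that $\ker(\rho_1^* - \mathrm{Id}) = 0$ (so $b_1 = 1$), and combine this with the non-triviality of the torsion form $H = d^c_+\omega_+ \neq 0$ to place $\mathbb{T}^3_\rho$ in Kodaira's class $\mathrm{VII}_0$. Invoking then the Apostolov--Gualtieri classification \cite{AG} of complex surfaces carrying a split generalized K\"ahler structure, together with the $\mathbb{T}^3$-bundle structure over $S^1$ and with odd first Betti number, identifies $\mathbb{T}^3_\rho$ biholomorphically with an Inoue surface in the family $S_M$. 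The main obstacle is precisely in this last step: showing in full generality (rather than only in the Inoue specialization of Example \ref{example:basis}) that the compatibility of $\rho$ with the coframe at $t=0, 1$ together with the constant condition on $(1/l)'/a_1$ force $\ker(\rho_1^* - \mathrm{Id}) = 0$ and rule out all the other items in the surface classification; once this is achieved the argument is complete, the bulk of the work being the structural verification carried out in the previous steps.
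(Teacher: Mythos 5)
The construction of the split generalized K\"ahler structure in your proposal (descent of the coframe, definition of $J_\pm$, $g$, $\omega_\pm$, the bracket computations $[\partial/\partial t, e_2]=\tfrac{v}{l}e_2+\tfrac{w}{l}e_3$ and $[\partial/\partial t, e_3]=-\tfrac{w}{l}e_2+\tfrac{v}{l}e_3$, the vanishing of $N_{J_\pm}$, and the computation of $d^c_\pm\omega_\pm$) is essentially identical to the paper's argument, and that part is sound.

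The genuine gap is in the final identification with an Inoue surface of type $S_M$, and it is exactly where you flag ``the main obstacle'': your plan is to prove $\ker(\rho_1^*-\mathrm{Id})=0$ directly, but you give no argument for this in the general setting, and it is not clear how the hypotheses \eqref{eqn:torus} alone would yield it by a linear-algebra computation as in Example \ref{example:basis} (where an explicit matrix was available). The paper takes a different and complete route here: it reduces, via Corollary 1 of \cite{AG}, to showing that the de Rham class $[H]$ of the torsion $H=d^c_+\omega_+=\tfrac{2v}{l}\,e^1\wedge e^2\wedge e^3$ is non-trivial --- note that $H\neq 0$ as a form (which is all you establish) does not suffice; one needs $[H]\neq 0$ in $H^3(\mathbb{T}^3_\rho)$. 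This is proved by contradiction: pick $\bar t$ with $v(\bar t)\neq 0$ and restrict $H$ along the inclusion $\iota_{\bar t}:\mathbb{T}^3\to\mathbb{T}^3_\rho$; if $H$ were exact the restriction $\iota_{\bar t}^*H=\tfrac{2v(\bar t)}{l(\bar t)}\,\mathrm{vol}_{\mathbb{T}^3}$ would be exact, contradicting Stokes' theorem since its integral over $\mathbb{T}^3$ is non-zero. Oddness of $b_1$ then follows from \cite{AG}, and combined with the topological description as a $\mathbb{T}^3$-bundle over $S^1$ and the classification of split generalized K\"ahler surfaces, this pins down the Inoue surface. You should replace your unproven claim $\ker(\rho_1^*-\mathrm{Id})=0$ with this fibre-integration argument (or supply an independent proof of that kernel statement) to close the proof.
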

\begin{proof}
A  geometric structure on $\mathbb{T}^3 \times [0,1]$ descends on the mapping torus  $\mathbb{T}^3_\rho$  if  it is preserved by the map sending $(\underline{x},0)$ to $(\rho(\underline{x}),1)$. \\
 Therefore, by  the first condition of \eqref{eqn:torus},     $\{e^i(t)\}$ is a set of well defined $1$-forms on $\mathbb{T}^3_\rho$.  Furthermore, $F_\pm (t)$  and $\{e_i(t)\}$ both descend to  $\mathbb{T}^3_\rho$, since the pullback $\rho^*$ distributes over the wedge product and one can easily check that $\rho_*\big(e_{i}(0)_{\underline{x}}\big)=e_{i}(1)_{\rho(\underline{x})}$. From now on we  will denote by $e^i$ and $e_i$ the corresponding global  $1$-forms and vector fields on $\mathbb{T}^3_\rho$,  respectively. 
 
Let  $\theta$ be  the pull-back of the standard volume form of $S^1$ via the fibration $$\pi:\mathbb{T}^3_\rho \to S^1, \ [(\underline{x},t)]\mapsto e^{2\pi i t}$$ which, up to rescaling, locally corresponds to $dt$.  Hence $d (e^j \wedge \theta) =0$,  for every $j =1,2,3$, since  the coefficients of $dx^i$ in the expressions of $e^j$ only depends on the time-coordinate.\\
To prove the Lemma we have to construct  a pair of complex structures   $J_\pm$   on $\mathbb{T}^3_\rho$ and  a Riemannian metric  $g$ compatible with both $J_\pm$ such that $[J_+,J_-]=0$, $d^c_{-}\omega_- = -d^c_{+}\omega_+$  and $ dd^c_{+}\omega_+=0$, where we denote by $\omega_\pm$ the fundamental forms of the pairs $(g,J_\pm)$.\\
Note that the following non-degenerate $2$-forms 
\[
\omega_+=e^1\wedge \theta +F_+, \quad  \omega_-=e^1\wedge \theta+F_- .
\]
are well defined on $\mathbb{T}^3_\rho$ and 
\[
  d\omega_\pm= d(e^1 \wedge \theta)+ d F_\pm  =  d F_\pm
\]
where the last equality holds since $d(e^1\wedge \theta)=0$. 
The $3$-forms $dF_\pm$ are given by
\[
  dF_\pm= \pm d\bigg(\frac{1}{l(t)} dx^2 \wedge dx^3\bigg)=\mp \  \frac{l'(t)}{l(t)} \  dt \wedge e^2 \wedge e^3=\mp \  \frac{2v(t)}{l(t)} \ dt \wedge e^2 \wedge e^3,
\]
where $2\frac{v(t)}{l(t)}$ is the smooth function on $\mathbb{T}^3_\rho$ mapping $[(\underline{x},p,t)]$ to $ 2\frac{v(t)}{l(t)}$. 
The previous map is well defined on $\mathbb{T}^3_\rho$, by condition \eqref{eqn:1}. In terms of global $1$-forms, $dF_\pm= \mp \  \frac{2v}{l} \ \theta \wedge e^2 \wedge e^3$. \\ %
The $2$-forms $\omega_{\pm}$  are of type $(1,1)$  with respect to the almost complex structures  $J_{\pm}$, defined by
\begin{equation*}
    \begin{array}{l}
    J_\pm(e_1)= \frac{\partial}{\partial \theta}, \\[5pt]
    J_\pm(e_2)= \phi_\pm(e_2)=\pm e_3, \\ [5pt]
    J_\pm(e_3)= \phi_\pm(e_3)= \mp e_2, \\ [5pt]
    \end{array}
\end{equation*} 
where $\frac{\partial}{\partial \theta}$ is the $S^1$-vector field, whose local expression is $\frac{\partial}{\partial t}$.\\
Moreover,   $-\omega_+ J_+ = -\omega_- J_-=g$,  where 
\[
  g=\sum_{i=1}^3 (e^i)^{2}+\theta^2.
\] 
Indeed,
\[
\begin{array}{l}
     J_\pm \omega_\pm\big( e_1, \frac{\partial}{\partial \theta}\big)=\omega_\pm \big(\frac{\partial}{\partial \theta}, -e_1,\big)=\omega_\pm\big( e_1, \frac{\partial}{\partial \theta}\big),  \\ [5pt]
     J_\pm \omega_\pm(e_2,e_3)=\omega_\pm (\pm e_3, \mp e_2)=\omega_\pm (e_2,e_3), \\ [5pt]
\end{array}
\]
and $\omega_\pm= J_\pm \omega_\pm=0$ otherwise. \\
We claim the almost complex structures $J_\pm$ to be integrable by verifying the vanishing of the Nijenhuis tensor $N_{J_\pm} (X, Y) =0$, for every vector field $X$ and $Y$.
The only non-trivial checks are for 
\begin{align*}
    & X=e_1, \ \ Y\in\{e_2,e_3\}, \\ 
    & X= \frac{\partial}{\partial \theta}, \ \ Y=\{e_2,e_3\}
\end{align*}
and it is sufficient to check the vanishing of $N_{J_\pm}(X,Y)$  in a local trivialization.  Hence in the sequel we will use the local expression of $\frac{\partial}{\partial \theta}$ (which we recall to be $ \frac{\partial}{\partial t}$). \\
Let $X=e_1$ and $Y=e_2$. Then
$$
\begin{array}{lcl}
N_{J_\pm}(e_1,e_2)&=&[J_\pm e_1, J_\pm e_2]-J_\pm([J_\pm e_1, e_2]+[e_1, J_\pm e_2])-[e_1, \
e_2]\\[4pt]
&=& \big[ \frac{\partial}{\partial t}, J_\pm e_2\big]-J_\pm\big(\big[ \frac{\partial}{\partial t}, e_2\big]\big) \\[4pt]
&=& \pm \big[ \frac{\partial}{\partial t}, e_3\big]-J_\pm\big(\big[ \frac{\partial}{\partial t}, e_2\big]\big).
\end{array}
$$
Since
\[
\left[ \frac{\partial}{\partial t}, e_2\right]=b'_2(t) \frac{\partial}{\partial x^2}+b'_3(t) \frac{\partial}{\partial x^3}=\frac{v(t)}{l(t)} e_2 + \frac{w(t)}{l(t)} e_3
\]
and 
\begin{align*}
\left[ \frac{\partial}{\partial t}, e_3\right]=-b_3'(t) \frac{\partial}{\partial x^2}+b_2'(t) \frac{\partial}{\partial x^3}=\frac{-w(t)}{l(t)}e_2+ \frac{v(t)}{l(t)}e_3,
\end{align*}
we obtain
\begin{align*}
N_{J_\pm}(e_1,e_2)&=\pm \bigg( \frac{-w(t)}{l(t)}e_2+ \frac{v(t)}{l(t)}e_3 \bigg) - J_\pm \bigg( \frac{v(t)}{l(t)} e_2 + \frac{w(t)}{l(t)} e_3 \bigg)=0.
\end{align*}
Observe that all the previous objects are actually well defined on $\mathbb{T}^3_\rho$ by conditions \eqref{eqn:1} and \eqref{eqn:2}. Similarly, $N_{J_\pm}(e_1,e_3) =0$.
If $X=\frac{\partial}{\partial \theta}$ and $Y=e_i$, with $i=2,3$, we have
\begin{align*}
N_{J_\pm}\left(\frac{\partial}{\partial t},e_i\right)&=\left[J_\pm \frac{\partial}{\partial t}, J_\pm e_i\right]-J_\pm \left( \left[J_\pm \frac{\partial}{\partial t}, e_i\right]+\left[\frac{\partial}{\partial t}, J_\pm e_i\right]\right)-\left[\frac{\partial}{\partial t}, e_i\right]\\
&=-[e_1, J_\pm e_i]-J_\pm\left(-[e_1, e_i]+\left[\frac{\partial}{\partial t}, J_\pm e_i\right]\right)-\left[\frac{\partial}{\partial t}, e_i\right]\\
&=-J_\pm\left(\left[\frac{\partial}{\partial t}, J_\pm e_i\right]\right)-\left[\frac{\partial}{\partial t}, e_i\right]\\
&=\left[ \frac{\partial}{\partial t}, e_i \right]-\left[ \frac{\partial}{\partial t}, e_i \right]=0. 
\end{align*}
It remains to verify the condition on the real Dolbeault operators of $\omega_\pm$. Since $d \omega_\pm= dF_\pm= \mp \frac{2v}{l} \ \theta \wedge e^2 \wedge e^3$ we get  %
\[
\begin{split}
d^c_{+} \omega_+&= J_+^{-1} d\omega_+= J_+^{-1} dF_+= - \frac{2v}{l} J_+^{-1}(\theta \wedge e^2 \wedge e^3)=\\
&= - \frac{2v}{l} J_+^{-1} \theta \wedge J_+^{-1} e^2 \wedge J_+^{-1} e^3= \frac{2v}{l} J_+  \theta \wedge J_+ e^2 \wedge J_+ e^3=\\
&=\frac{2v}{l} \ e^1 \wedge e^2 \wedge e^3= - \bigg( \frac{1}{l}\bigg)' \cdot \frac{1}{a_1} dx^1 \wedge dx^2 \wedge dx^3, 
\end{split}
\]
while
\[
\begin{split}
d^c_{-} \omega_-&= J_-^{-1} d\omega_-= J_-^{-1} dF_-= + \frac{2v}{l} J_-^{-1}(\theta \wedge e^2 \wedge e^3)=\\
&= + \frac{2v}{l} J_-^{-1} \theta \wedge J_-^{-1} e^2 \wedge J_-^{-1} e^3= - \frac{2v}{l} J_-  \theta \wedge J_- e^2 \wedge J_- e^3=\\
&=-\frac{2v}{l} \ e^1 \wedge e^2 \wedge e^3= \bigg( \frac{1}{l}\bigg)' \cdot \frac{1}{a_1} dx^1 \wedge dx^2 \wedge dx^3.
\end{split}
\]
As a consequence  
\begin{equation} \label{eqn:ddc}
    d^c_+ \omega_+= -d^c_-\omega_-=\frac{2v}{l} \ e^1 \wedge e^2 \wedge e^3.
\end{equation}\\
By hypothesis, $\big( \frac{1}{l}\big)' \cdot \frac{1}{a_1}$ is constant, implying that $dd^c_\pm \omega_\pm=0$.\\
Observe that by construction $[J_+,J_-]=0$ and $J_+\neq \pm J_-$, hence the generalized K\"ahler structure is of  split  type. By hypothesis  $v$ is not identically zero and so as already observed in Example \ref{example:basis}, it suffices to show that  $b_1(\mathbb{T}^3_\rho)$ is odd, since we topologically have a $3$-torus bundle over $S^1$. \\
By  Corollary 1 in \cite{AG}, it is enough to prove   that the  de Rham class $[H]$ is non-trivial.  By contradiction let us assume that $H$ is exact.\\
As $v$ is not identically zero, there exists a $\bar{t}$ in $[0,1]$ such that $v(\bar{t})\neq 0$. For $t=\bar{t}$ we consider the  inclusion
$$
    \iota_{\bar{t}}: \mathbb{T}^3 \to \mathbb{T}^3_\rho, 
    \underline{x}  \mapsto [(\underline{x}, \bar{t} )].
$$
 Let $\alpha:= \iota_{\bar{t}}^*H \in \Omega^3(\mathbb{T}^3)$. Since $H$ is exact, $\alpha$ is exact too. We compute
\[
\alpha=\iota_{\bar{t}}^*H =\frac{2v (\bar{t})}{l(\bar{t})}  \ \ e^1 (\bar{t}) \wedge e^2 (\bar{t}) \wedge e^3(\bar{t}) = \frac{2v (\bar{t})}{l(\bar{t})} \ \ vol_{\mathbb{T}^3},
\]
where $vol_{\mathbb{T}^3}$ is the volume form $e^1 (\bar{t}) \wedge e^2 (\bar{t}) \wedge e^3(\bar{t})$ of $\mathbb{T}^3$. By Stokes Theorem
\[
0=\int_{\mathbb{T}^3} \alpha = \frac{2v (\bar{t})}{l(\bar{t})} \int_{\mathbb{T}^3} 
vol_{\mathbb{T}^3}\neq 0,
\]
which is an absurd. This concludes the proof.
\end{proof}

We can extend the previous construction  considering  the mapping tori of $\mathbb{T}^3 \times K$, where $\mathbb{T}^3$ is endowed with the same geometric structures as before and  $K$ is any compact  K\"ahler manifold.
\begin{theorem}\label{lemma:2}
Let $(N,   J,  k, \omega)$ 
be  a compact  K\"ahler manifold and let $\{e^i (t)\}$ be the $1$-forms  given in \eqref{formei(t)} satisfying \eqref{eqn:1} and \eqref{eqn:2}.
Suppose that there exists  a diffeomorphism $\rho$  of  $\mathbb{T}^3$ such that, 
\begin{equation} \label{eqn:mt}
\begin{array}{l} 
    \rho^* \big(e^{i} (1)_{\rho(\underline{x})}\big)= e^{i}(0)_{\underline{x}},  \\[5pt]
    \big(\frac{1}{l(t)}\big)' \cdot \frac{1}{a_1}= const \neq 0,
\end{array}
\end{equation}
and    a diffeomorphism  $\psi$  of $N$   preserving the K\"ahler structure,
i.e.   holomorphic with respect to  the complex structure $J$,  and such that  $\psi^*(\omega)=\omega$. Then  the mapping torus $M_f$  of $M=\mathbb{T}^3 \times N$ by the  diffeomorphism   
$$
f: (\underline{x},p)   \to (\rho(\underline{x}),\psi(p)), 
$$
admits a  split generalized K\"ahler structure $(I_{\pm}, g, \omega_{\pm})$. 
\end{theorem}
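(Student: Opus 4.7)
The plan is to extend the split generalized K\"ahler structure on $\mathbb{T}^3_\rho$ constructed in Lemma~\ref{lemma:splitmt} to the mapping torus $M_f$ via a block-diagonal construction, combining it with the K\"ahler data on $N$. Concretely, on $\mathbb{T}^3 \times N \times [0,1]$ I would define $I_\pm$ to act as the complex structures $J_\pm$ of Lemma~\ref{lemma:splitmt} on the $\mathbb{T}^3\times[0,1]$ factor and as $J$ on $N$; set $g := h_{\mathbb{T}^3_\rho} + k$ with $h_{\mathbb{T}^3_\rho} = \sum_{i=1}^3 (e^i)^2 + \theta^2$; and put $\Omega_\pm := \omega_\pm + \omega$ with $\omega_\pm = e^1\wedge\theta + F_\pm$ as in the Lemma.

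The first step is descent to $M_f$. Since $\rho$ preserves the coframe $\{e^i\}$ by the first condition in~\eqref{eqn:mt}, it preserves $J_\pm$, $\omega_\pm$ and $h_{\mathbb{T}^3_\rho}$, and by hypothesis $\psi$ preserves $J$, $k$ and $\omega$. Hence the diffeomorphism $f=(\rho,\psi)$ preserves every tensor listed above, so each of $I_\pm$, $g$, $\Omega_\pm$ descends to a globally defined smooth object on $M_f$.

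Next I would verify the split generalized K\"ahler axioms by reducing each to the two factors. The commutator $[I_+,I_-]$ is block-diagonal with blocks $[J_+,J_-]=0$ (Lemma~\ref{lemma:splitmt}) and $0$ on $N$, and $I_+\neq\pm I_-$ because $J_+\neq\pm J_-$, so the structure will be of split type. Integrability $N_{I_\pm}\equiv 0$ follows from $N_{J_\pm}=0$ and $N_J=0$ on the respective factors, together with the observation that for vector fields $X,Y$ tangent to different factors every Lie bracket appearing in $N_{I_\pm}(X,Y)$ vanishes since the local frames of the two factors commute. Compatibility $g(I_\pm\cdot,I_\pm\cdot)=g$ is immediate from the direct-sum structure. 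Finally, since $\omega$ is closed, $d\Omega_\pm = d\omega_\pm = dF_\pm$ lies in the subalgebra generated by $\{\theta,e^2,e^3\}$, on which $I_\pm^{-1}$ acts exactly as $J_\pm^{-1}$; combining this with~\eqref{eqn:ddc} gives
\[
d^c_+\Omega_+ \;=\; d^c_+\omega_+ \;=\; -d^c_-\omega_- \;=\; -d^c_-\Omega_- \;=:\; H,
\]
and $H$ is a constant multiple of $dx^1\wedge dx^2\wedge dx^3$ by the second condition in~\eqref{eqn:mt}, hence closed.

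The proof is mostly structural once the block-decomposition viewpoint is adopted: every axiom reduces to the corresponding fact established either in Lemma~\ref{lemma:splitmt} or in the Kähler hypothesis on $N$, and the only non-automatic point, closedness of the torsion $3$-form $H$, is precisely what the constancy assumption $((1/l)'/a_1=\mathrm{const})$ was designed to guarantee. The main obstacle is therefore bookkeeping rather than substance: one must check carefully that the tensors on the product glue correctly on $M_f$, which is supplied by the invariance conditions on $\rho$ and $\psi$.
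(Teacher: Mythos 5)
Your proposal is correct and follows essentially the same route as the paper: the authors likewise define $I_\pm$ block-diagonally (as the $J_\pm$ of the surface construction on the $\mathbb{T}^3\times S^1$ directions and as $J$ on the distribution $D$ tangent to $N$), take $\omega_\pm=e^1\wedge\theta+F_\pm+\omega$ and $g=\sum_i(e^i)^2+k+\theta^2$, verify descent from the invariance of the tensors under $f=(\rho,\psi)$, and reduce integrability and the $d^c_+\omega_+=-d^c_-\omega_-$ identity to the factorwise computations already done for $\mathbb{T}^3_\rho$. The only point the paper treats more carefully than you do is checking that the locally defined $J$ on $D$ is independent of the chosen trivialization of $M_f$, but this is bookkeeping of exactly the kind you flag.
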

\begin{proof}
Since the map $f$ is a diffeomorphism of $M= \mathbb{T}^3 \times N$, we may define the mapping torus $M_f$ as the quotient 
\[
 M_f = \frac{ \mathbb{T}^3 \times N \times [0,1]}{(\underline{x},p,0) \sim (f(\underline{x},p),1)}.
\]
We have already observed that a geometric structure on $M \times [0,1]$ descends on the mapping torus if it glues up correctly in the quotient, i.e. if it is preserved by the map sending $(\underline{x},p,0)$ to $(\rho(\underline{x}),\psi(p),1)$. \\
Moreover $\{e^i(t)\}$ and $\omega$ are well defined on $M_f$ as $f$ has a block structure. Indeed, by hypothesis conditions \eqref{eqn:mt} are satisfied and $\psi$ preserves the K\"ahler structure. As in the proof of Lemma \ref{lemma:2}, $F_\pm (t)$  and $\{e_i(t)\}$ descend to $M_f$ too.\\ 
From now on we denote by $e^i$ and $e_i$ the corresponding global  $1$-forms and vector fields on $M_f$ respectively. \\
Again, we denote by $\theta$ the pull-back of the standard volume form of $S^1$. With the same argument as before, $d(e^j \wedge \theta)=0$.\\ 
To prove the Theorem we have to construct  a pair of complex structures   $I_\pm$   on $M_f$ and  a Riemannian metric $g$ compatible with both $I_\pm$ such that $d^c_{-}\omega_- = -d^c_{+}\omega_+$  and $ dd^c_{+}\omega_+=0$, where we denote by $\omega_\pm$ the fundamental forms of the pairs $(g,I_\pm)$.\\
Note that, by previous remarks, the following non-degenerate $2$-forms 
\[
\omega_+=e^1\wedge \theta +F_+ +\omega, \quad  \omega_-=e^1\wedge \theta+F_- +\omega.
\]
are well defined on $M_f$. \\
Observe that the external derivative of both $\omega_\pm$ is a $3$-form globally defined on the mapping torus. Indeed, 
\[
  d\omega_\pm= d(e^1 \wedge \theta)+ d F_\pm + d\omega=d F_\pm, 
\]
where the last equality holds since $d(e^1 \wedge \theta)=0$, and $\omega$ is closed by the K\"ahler hypothesis on $N$. If we explicit the local expression of $dF_\pm$, we get  
\[
  dF_\pm= \mp \  \frac{2v(t)}{l(t)} \ dt \wedge e^2 \wedge e^3.
\]
Observe that since $2\frac{v}{l}$ is a smooth global well defined function on $M_f$ 
by condition \eqref{eqn:1}, the global expression of $dF_\pm$ is actually $\mp \  \frac{2v}{l} \ \theta \wedge e^2 \wedge e^3$.\\
Let us denote by $D$ the smooth involutive distribution $\ker(e^1) \cap \ker( e^2) \cap \ker (e^3) \cap \ker (\theta) \subset \mathfrak{X}(M_f)$. If a vector field $X$ is in $D$, then 
\[
e^i(X)=0 \ \ \ \text{and} \ \ \  \theta(X)=0.
\] 
 Recall that a trivialization $U$ of $M_f$ is of the kind $U_1 \times W_1 \times I_1$, if $\{0,1\} \not \in I_1$ and of the kind $\pi \big(U_1 \times W_1 \times [0,\frac{1}{2}) \sqcup \rho(U_1) \times \psi(W_1) \times (\frac{1}{2}, 1]\big)$ otherwise, with 
$U_1$, $W_1$ and $I_1$ being open sets of $\mathbb{T}^3$, $N$ and $(0,1)$ respectively and $\pi: \mathbb{T}^3 \times N \times [0,1]\to M$ being the quotient map. \\
On such a trivialization we may write 
\[X_{|U}= \sum_{i=1}^3 X^i (x_i,y_j,t) \ e_i+ \sum_{j=1}^{2k} Y^j(x_i,y_j,t) \  \frac{\partial}{\partial y^j} + Z (x_i,y_j,t) \frac{\partial}{\partial t},\]
where $(x_i,y_j,t)$ are local coordinates on $U$ and $X^i, Y^j, Z$ are smooth functions on $U$.  Since $X \in D$,
\[X_{|\ U}= \sum_{j=1}^{2k} Y^j  \frac{\partial}{\partial y^j}.\]
By hypothesis, the complex structure $J$ satisfies
\[\psi_* \circ J=J \circ \psi_*,\]
and thus induces a well defined complex structure on $D$ by $C^{\infty}(M_f)$-linearity. More precisely
\[J\bigg( Y^j \frac{\partial}{\partial y^j}\bigg):= Y^j J \bigg(\frac{\partial}{\partial y^j}\bigg).\]
Although we defined $J$ only locally, the previous definition works also globally. Let us assume that $\big(\widetilde{U}, (\widetilde{x}^i,\widetilde{y}^j,\tilde{t})\big)$ is a new system of local coordinates, with no empty intersection with $U$.
Since $e^i$ and $\theta$ are global $1$-forms, 
\[
  X=\sum_{j=1}^{2k} \widetilde{Y}^j \  \frac{\partial}{\partial \widetilde{y}^j}=\sum_{j=1}^{2k} Y^j \  \frac{\partial}{\partial y^j}
\]
on $\widetilde{U} \cap U$. \\
We have to check that $J\big(\widetilde{Y}^j  \frac{\partial}{\partial \widetilde{y}^j}\big)=J \big( Y^j \frac{\partial}{\partial y^j}\big)$. We have 
\begin{align*}
    &J \bigg( Y^j \frac{\partial}{\partial y^j}\bigg)= Y^j J \bigg( \frac{\partial}{\partial y^j}\bigg)= \widetilde{Y}^k \frac{\partial y^j}{\partial \widetilde{y}^k} J\bigg(\frac{\partial \widetilde{y}^r}{\partial y^j} \frac{\partial}{\partial \widetilde{y}^r}\bigg)=\\
    &= \widetilde{Y}^k \frac{\partial y^j}{\partial \widetilde{y}^k} \frac{\partial \widetilde{y}^r}{\partial y^j} J\bigg(\frac{\partial}{\partial \widetilde{y}^r}\bigg)= \widetilde{Y}^k  J\bigg(\frac{\partial}{\partial \widetilde{y}^k}\bigg)=J\bigg( \widetilde{Y}^k  \frac{\partial}{\partial \widetilde{y}^k}\bigg),
\end{align*}
which concludes the proof of the statement above.\\
Consider  the two almost complex structures $I_\pm$ on $M_f$ be defined as follows
\begin{equation*}
    \begin{array}{l}
    I_\pm(e_1)= \frac{\partial}{\partial \theta}, \\[5pt]
    I_\pm(e_2)=\phi_\pm e_2 = \pm e_3, \\ [5pt]
    I_\pm(e_3)=\phi_\pm e_3 = \mp e_2, \\ [5pt]
    I_+=J \ \ \text{and} \ \ I_-=J \ \ \text{on $D$} \\ [5pt]
    \end{array}
\end{equation*} 
where $\frac{\partial}{\partial \theta}$ is the $S^1$-vector field, whose local expression is $\frac{\partial}{\partial t}$.\\
The $2$-forms $\omega_{\pm}$  are of type $(1,1)$  with respect $I_{\pm}$  
and satisfy $-\omega_+ I_+ = -\omega_- I_-=g$,  where 
\[
  g=\sum_{i=1}^3 (e^i)^2+k+\theta^2.
\] 
Indeed,
\[
\begin{array}{l}
     I_\pm \omega_\pm\big( e_1, \frac{\partial}{\partial \theta}\big)=\omega_\pm \big(\frac{\partial}{\partial \theta}, -e_1,\big)=\omega_\pm\big( e_1, \frac{\partial}{\partial \theta}\big),  \\ [5pt]
     I_\pm \omega_\pm(e_2,e_3)=\omega_\pm (\pm e_3, \mp e_2)=\omega_\pm (e_2,e_3), \\ [5pt]
     I_\pm \omega_\pm(X,Y)=\omega_\pm (J X , J Y)=\omega (J X , J Y)=\omega(X,Y)=\omega_\pm (X,Y), \ \ \text{if $X,Y \in D$,} \\ [3pt]
\end{array}
\]
and $\omega_\pm= I_\pm \omega_\pm=0$ otherwise. \\
We claim the almost complex structures $I_\pm$ to be integrable by verifying the vanishing of the Nijenhuis tensor
\[N_{I_\pm}=[I_\pm \cdot, I_\pm \cdot]-I_\pm([I_\pm \cdot, \cdot]+[\cdot, I_\pm \cdot])-[\cdot, \cdot]. 
\]
By the definition of $I_\pm$, if $X$ and $Y$ are in $D$, then $N_{I_{\pm}}(X,Y)=N_{J}(X,Y)=0$.\\
Observe also that the Nijenhuis tensor $N_{I_{\pm}}$ coincides with $N_{J_{\pm}}$ on $\mathfrak{X}(M_f) \setminus D$, where  $N_{J_{\pm}}$ is the Ninjenhuis tensor of the complex structures $J_\pm$ described in the proof of Lemma \ref{lemma:2}. It follows that $N_{I_{\pm_{|\mathfrak{X}(M_f) \setminus D}}}=N_{J_{\pm}}=0$.\\
It remains to check that $N_{I_\pm}(X,Y)=0$ for $X\in \big\{e_1, e_2, e_3, \frac{\partial}{\partial \theta}\big\}$ and $Y \in  D$.
As $N_{I_\pm}$ is a tensor, we can do a local computation. In the sequel we will use the local expression of $\frac{\partial}{\partial \theta}$ (which we recall to be $ \frac{\partial}{\partial t}$) and the local expression of the vector fields in $D$ described above. \\
Let $X\in \big\{e_1, e_2, e_3, \frac{\partial}{\partial \theta}\big\}$ and $Y \in  D$. We have that
\[
 N_{I_\pm}(X,Y)=N_{I_\pm}\left(X,Y^j \frac{\partial}{\partial y^j}\right)=Y^j N_{I_{\pm}} \left(X,\frac{\partial}{\partial y^j}\right)=0,
\]
concluding the proof of the integrability. \\
It remains to verify the condition on the real Dolbeault operators of $\omega_\pm$. Recall that $d^c_{\pm}= I_\pm^{-1} d I_\pm$,  but since $\omega_\pm$ are compatible with both $I_\pm$,\ $d^c_{\pm}\omega_\pm= I_\pm^{-1} d \omega_\pm.$ By previous remarks, $d \omega_\pm= dF_\pm= \mp \frac{2v}{l} \ \theta \wedge e^2 \wedge e^3.$ 
\[
\begin{split}
d^c_{+} \omega_+&= I_+^{-1} d\omega_+= I_+^{-1} dF_+= - \frac{2v}{l} I_+^{-1}(\theta \wedge e^2 \wedge e^3)=\\
&= - \frac{2v}{l} I_+^{-1} \theta \wedge I_+^{-1} e^2 \wedge I_+^{-1} e^3= \frac{2v}{l} I_+  \theta \wedge I_+ e^2 \wedge I_+ e^3=\\
&=\frac{2v}{l} \ e^1 \wedge e^2 \wedge e^3 =- \bigg( \frac{1}{l}\bigg)' \cdot \frac{1}{a_1} dx^1 \wedge dx^2 \wedge dx^3,
\end{split}
\]
while
\[
\begin{split}
d^c_{-} \omega_-&= I_-^{-1} d\omega_-= I_-^{-1} dF_-= + \frac{2v}{l} I_-^{-1}(\theta \wedge e^2 \wedge e^3)=\\
&= + \frac{2v}{l} I_-^{-1} \theta \wedge I_-^{-1} e^2 \wedge I_-^{-1} e^3= - \frac{2v}{l} I_-  \theta \wedge I_- e^2 \wedge I_- e^3=\\
&=-\frac{2v}{l} \ e^1 \wedge e^2 \wedge e^3= \bigg( \frac{1}{l}\bigg)' \cdot \frac{1}{a_1} dx^1 \wedge dx^2 \wedge dx^3.
\end{split}
\]
As a consequence  $d^c_+ \omega_+= -d^c_-\omega_-$. By hypothesis, $\big( \frac{1}{l}\big)' \cdot \frac{1}{a_1}$ is constant, implying that $dd^c_\pm \omega_\pm=0$.\\
Clearly, the generalized K\"ahler structure $(I_\pm, g, \omega_{\pm})$ constructed above is split, concluding the proof.
\end{proof}

\section{Non-split generalized K\"ahler mapping tori} \label{section4}

In this section we further generalize the construction just seen to describe examples of non-split generalized K\"ahler mapping tori. The idea is to replace the K\"ahler condition with the hyperK\"ahler one  and to use that the three complex structures  in the hypercomplex structure  satisfy $J_1 J_2=-J_2 J_1$. Hence, we will define on the integrable distribution $D$ two different complex structures which will give raise to the non-split property.
\begin{theorem}\label{lemma:1}
Let $(N,   J_1, J_2, J_3, \omega_1,\omega_2,\omega_3, k)$ be  a compact hyperK\"ahler manifold and let $\{e^i (t)\}$ be the $1$-forms  given in \eqref{formei(t)} satisfying \eqref{eqn:1} and \eqref{eqn:2}.
Suppose that there exists  a diffeomorphism $\rho$  of  $\mathbb{T}^3$  such that 
\begin{equation}  
\begin{array}{l}
        \rho^* \big(e^{i} (1)_{\rho(\underline{x})}\big)= e^{i}(0)_{\underline{x}},  \\[5pt] 
     \big(\frac{1}{l(t)}\big)' \cdot \frac{1}{a_1}= const \neq 0,
\end{array}
\end{equation}
and    a diffeomorphism  $\psi$  of $N$   preserving the hyperK\"ahler structure, 
i.e.   holomorphic with respect to every complex structure $J_i$, $i = 1,2,3$, and such that
\begin{equation} \label{eqn:hyper}
       \psi^*(\omega_i)=\omega_i.
        \end{equation}
Then  the mapping torus $M _f$ of $M= \mathbb{T}^3 \times N$  by the  diffeomorphism   
$$
f: (\underline{x},p)   \to (\rho(\underline{x}),\psi(p)), 
$$
admits a non-split generalized K\"ahler structure $(I_{\pm}, g, \omega_{\pm})$. 
\end{theorem}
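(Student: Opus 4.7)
The plan is to mimic the proof of Theorem~\ref{lemma:2}, but to exploit the full hyperK\"ahler structure on $N$ by using \emph{two different} compatible complex structures on the involutive distribution $D=\ker e^1\cap\ker e^2\cap\ker e^3\cap\ker\theta$. Specifically, I would define the two almost complex structures on $M_f$ by
\[
I_\pm(e_1)=\tfrac{\partial}{\partial\theta},\quad I_\pm(e_2)=\pm e_3,\quad I_\pm(e_3)=\mp e_2,\quad I_+\big|_D=J_1,\quad I_-\big|_D=J_2,
\]
and set the candidate fundamental forms and metric as
\[
\omega_+=e^1\wedge\theta+F_++\omega_1,\qquad \omega_-=e^1\wedge\theta+F_-+\omega_2,\qquad g=\sum_{i=1}^3(e^i)^2+\theta^2+k.
\]
The $1$-forms $e^i$, the forms $F_\pm$, and $\theta$ descend to $M_f$ exactly as in Theorem~\ref{lemma:2}, while $\omega_1,\omega_2$ and $k$ descend because $\psi$ preserves the hyperK\"ahler structure by \eqref{eqn:hyper}; in particular $I_\pm$ are globally defined on $D$ since $\psi_*\circ J_i=J_i\circ\psi_*$ for $i=1,2$. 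Compatibility $g=\omega_\pm(\cdot,I_\pm\cdot)$ and the $(1,1)$-type of $\omega_\pm$ hold on $D$ because $(k,J_1,\omega_1)$ and $(k,J_2,\omega_2)$ are K\"ahler triples, and on the complement just as in Theorem~\ref{lemma:2}.

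For the integrability of $I_\pm$, I would split the Nijenhuis computation into three cases. For $X,Y\in\{e_1,e_2,e_3,\partial/\partial\theta\}$ the calculation is identical to the one already carried out in the proof of Lemma~\ref{lemma:splitmt} and Theorem~\ref{lemma:2} (and relies only on conditions \eqref{eqn:1}--\eqref{eqn:2}). For $X,Y\in D$ one has $N_{I_\pm}(X,Y)=N_{J_i}(X,Y)=0$, $i=1,2$, by integrability of the hyperK\"ahler complex structures. The mixed case, $X\in\{e_1,e_2,e_3,\partial/\partial\theta\}$ and $Y\in D$, is treated by tensoriality exactly as in Theorem~\ref{lemma:2}: writing $Y=Y^j\partial/\partial y^j$ locally, $N_{I_\pm}(X,Y^j\partial/\partial y^j)=Y^j N_{I_\pm}(X,\partial/\partial y^j)=0$.

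The generalized K\"ahler identity $d^c_+\omega_+=-d^c_-\omega_-$ and the closedness $dd^c_\pm\omega_\pm=0$ then reduce to the analogous computation in Theorem~\ref{lemma:2}: since $\omega_1$ and $\omega_2$ are closed, $d\omega_\pm=dF_\pm=\mp\tfrac{2v}{l}\,\theta\wedge e^2\wedge e^3$, and applying $I_\pm^{-1}$ gives $d^c_\pm\omega_\pm=\pm\tfrac{2v}{l}\,e^1\wedge e^2\wedge e^3=\mp\bigl(\tfrac1l\bigr)'\tfrac{1}{a_1}\,dx^1\wedge dx^2\wedge dx^3$, a constant multiple of the volume form on $\mathbb{T}^3$, hence closed by the hypothesis on $(1/l)'/a_1$.

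The genuinely new point, and the one the construction is designed to achieve, is the non-split condition. On $D$ we have $[I_+,I_-]=[J_1,J_2]=J_1J_2-J_2J_1=2J_3\neq 0$ by the quaternionic relations, so $I_+$ and $I_-$ do not commute. The main obstacle I anticipate is therefore not the integrability or the Dolbeault identity (which are straightforward adaptations of Theorem~\ref{lemma:2}), but ensuring that $I_+|_D$ and $I_-|_D$ are genuinely globally well defined on $M_f$: this is where the assumption that $\psi$ is holomorphic with respect to \emph{every} $J_i$ is essential, since it guarantees that the two locally defined complex structures on $D$ glue consistently under the mapping torus identification $(\underline{x},p,0)\sim(\rho(\underline{x}),\psi(p),1)$.
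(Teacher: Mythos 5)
Your proposal is correct and follows essentially the same route as the paper's own proof: the same forms $\omega_\pm=e^1\wedge\theta+F_\pm+\omega_{1,2}$, the same metric, the same definition of $I_\pm$ via $J_1$ and $J_2$ on the distribution $D$, the same reduction of integrability and the $d^c$-identity to the computations of Theorem~\ref{lemma:2}, and the same verification of non-splitness via $I_+I_-X=J_3X\neq -J_3X=I_-I_+X$ on $D$. No gaps.
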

\begin{proof}
The proof is similar to the previous one, so we highlight the differences between them and we pass over the details seen before. \\
We want to define a non-split generalized K\"ahler structure on the mapping torus 
\[
 M_f = \frac{ \mathbb{T}^3 \times N \times [0,1]}{(\underline{x},p,0) \sim (f(\underline{x},p),1)}.
\]
As in the previous proof, $\{e^i(t)\}$ and $(k,\omega_1,\omega_2,\omega_3)$ are well defined on $M_f$ by  \eqref{eqn:torus} and \eqref{eqn:hyper}. Furthermore, also $F_\pm (t)$  and $\{e_i(t)\}$ both descend to $M_f$, since the pullback $\rho^*$ distributes over the wedge product and one can easily check that $\rho_*\big(e_{i}(t)_{(\underline{x},0)}\big)=e_{i}(t)_{(\rho(\underline{x}),1)}$. From now on we  will denote by $e^i$ and $e_i$ the corresponding global  $1$-forms and vector fields on $M_f$, respectively. \\
Again, we denote by $\theta$ the pull-back of the standard volume form of $S^1$ via the fibration on  $S^1$ which, up to rescaling, locally corresponds to $dt$. \\
Note that since the $1$-forms $e^i$ and $\theta$ are actually the same as before, also in this case we have that $d(e^i \wedge \theta)=0$.\\
Now we have the first difference with respect to the previous construction. To define a non-split generalized K\"ahler structure, we consider  the following non-degenerate $2$-forms 
\[
\omega_+=e^1\wedge \theta +F_+ +\omega_{1}, \quad  \omega_-=e^1\wedge \theta+F_- +\omega_{2}.
\]
which are well defined on $M_f$. \\
The external derivative of both $\omega_\pm$ is a $3$-form globally defined on the mapping torus. Indeed, 
\[
  d\omega_\pm= d(e^1 \wedge \theta)+ d F_\pm + d\omega_i=d F_\pm, \ \ \text{with $i=2,3$,}
\]
where the last equality holds since $d(e^1 \wedge \theta)=0$ and $\omega_i$ are closed by the hyperK\"ahler hypothesis on $N$.  More precisely, we have 
\[
  dF_\pm= \mp \frac{2v}{l} \  \theta  \wedge e^2 \wedge e^3. 
\]
Let us denote by $D$ the smooth involutive distribution $\ker(e^1) \cap \ker( e^2) \cap \ker (e^3) \cap \ker (\theta) \subset \mathfrak{X}(M_f)$. If a vector field $X$ is in $D$, then 
\[
e^i(X)=0 \ \ \ \text{and} \ \ \  \theta(X)=0,
\] 
 and hence on a  trivialization $U$ of $M_f$  we may write 
\[X_{|\ U}= \sum_{j=1}^{4k} Y^j (x_i,y_j,t) \frac{\partial}{\partial y^j},\]
where $(x_i,y_j,t)$ are local coordinates on $U$ and $Y^j$ 
are smooth functions on $U$. \\
By hypothesis, the complex structures $J_{i}$ satisfy
\[\psi_* \circ J_i=J_i \circ \psi_*,\]
and thus induce well defined complex structures on $D$ by $C^{\infty}(M_f)$-linearity, i.e.
\[J_{i}\bigg( \sum_{j=1}^{4k}Y^j \frac{\partial}{\partial y^j}\bigg):= Y^j J_{i}\bigg(\frac{\partial}{\partial y^j}\bigg).\]
Although we defined $J_i$ only locally, the previous definition works also globally and the proof of this last statement proceeds straightforwardly as in Theorem \ref{lemma:2}. \\
Let the two almost complex structures $I_\pm$ on $M_f$ be defined as follows
\begin{equation*}
    \begin{array}{l}
    I_\pm(e_1)= \frac{\partial}{\partial \theta}, \\[5pt]
    I_\pm(e_2)=\phi_\pm e_2 = \pm e_3, \\ [5pt]
    I_\pm(e_3)=\phi_\pm e_3 = \mp e_2, \\ [5pt]
    I_+=J_{1} \ \ \text{and} \ \ I_-=J_{2} \ \ \text{on $D$} \\ [5pt]
    \end{array}
\end{equation*} 
where $\frac{\partial}{\partial \theta}$ is the $S^1$-vector field, whose local expression is $\frac{\partial}{\partial t}$.\\
The $2$-forms $\omega_{\pm}$  are of type $(1,1)$  with respect $I_{\pm}$   
and satisfy $-\omega_+ I_+ = -\omega_- I_-=g$,  where 
\[
  g=\sum_{i=1}^3 (e^i)^{2}+k+\theta^2.
 \] 
Indeed,
\[
\begin{array}{l}
     I_\pm \omega_\pm\big( e_1, \frac{\partial}{\partial \theta}\big)=\omega_\pm \big(\frac{\partial}{\partial \theta}, -e_1,\big)=\omega_\pm\big( e_1, \frac{\partial}{\partial \theta}\big),  \\ [5pt]
     I_\pm \omega_\pm(e_2,e_3)=\omega_\pm (\pm e_3, \mp e_2)=\omega_\pm (e_2,e_3), \\ [5pt]
     J_\pm \omega_\pm(X,Y)=\omega_\pm (J_i X , J_i Y)=\omega_i (J_i X , J_i Y)=\omega_i(X,Y)=\omega_\pm (X,Y), \  \ \\ [3pt]
\end{array}
\]
where in the last line $i=1,2$ respectively and $X,Y \in D$. The cases left out are trivial, as $\omega_\pm= I_\pm \omega_\pm=0$. \\
We claim the almost complex structures $I_\pm$ to be integrable by verifying the vanishing of the Nijenhuis tensor $N_{I_\pm}$.
This proof is the same as in Theorem \ref{lemma:2}, so we skip it to not be too repetitive. \\
It remains to verify the condition on the real Dolbeault operators of $\omega_\pm$. By previous remarks, $d \omega_\pm= dF_\pm= \mp \frac{2v}{l} \ \theta \wedge e^2 \wedge e^3$ and we have already computed $d^c_+ \omega_+= -d^c_-\omega_-$.\\
As before, since by hypothesis $\big( \frac{1}{l}\big)' \cdot \frac{1}{a_1}$ is constant,  $dd^c_\pm \omega_\pm=0$.\\
The generalized K\"ahler structure $(I_\pm, g, \omega_{\pm})$ constructed above is non-split. Indeed, since $I_\pm$ have both a block structures, it suffices to check that $I_+I_- \neq I_- I_+$ on a vector field $X$ in the block corresponding to $D$. Let $X$ be in $D$.
Then
\[
\begin{split}
   I_+ I_- X&= J_1 J_2 X = J_3 X \\
   I_- I_+ X&= J_2 J_1 X = -J_3 X.
\end{split}
\]
Alternatively, one may compute the Poisson tensor $\sigma=\frac{1}{2}[I_+,I_-]g^{-1}=-\omega_{3}^{-1} \neq 0$. 
\end{proof}

\begin{remark}
The previous proof adapts also in the case of $N$ being a generalized K\"ahler manifold $(N,J_\pm, \sigma_\pm,k)$ and $\psi$ being a diffeomorphism preserving the generalized K\"ahler structure, i.e. holomorphic with respect to $J_\pm$ and such that $\psi^*\sigma_\pm=\sigma_\pm$. In this latter case we set $\omega_\pm=e^1 \wedge \theta \ \pm \ dF_\pm + \ \sigma_\pm$ and $I_\pm=J_\pm$ on $D$. Since $[I_+,I_-]=[J_+,J_-]$ on $D$, $( I_\pm, g)$ will be a split generalized K\"ahler structure on $M_f$ if so is  $(J_\pm,k)$ on $N$, and non-split otherwise.  

\end{remark}

\section{Formality}  \label{sectformality}
In this section we prove  some results about the   formality  of the generalized K\"ahler manifolds constructed in  Sections \ref{section3} and \ref{section4}.

\smallskip

We   first show   the following proposition.

\begin{prop}  \label{proposition:nokahler}
Let $M_f$ be a generalized K\"ahler mapping torus constructed as in Theorems \ref{lemma:2} and \ref{lemma:1}, then  the first Betti number of  $M_f$ is odd. As a consequence, $M_f$  does not  admit any K\"ahler metric and  does  not satisfy the $dd^c$-lemma.
\end{prop}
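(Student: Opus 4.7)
The plan is to apply the Wang-type exact sequence recalled in Section~2 to the $S^1$-fibration $\pi:M_f\to S^1$ with fibre $M=\mathbb{T}^3\times N$. Since $M$ is connected, the identification
\[
 H^1(M_f)\;=\;\ker(f_1^*-\mathrm{Id})\,\oplus\,\operatorname{coker}(f_0^*-\mathrm{Id})
\]
yields $b_1(M_f)=\dim\ker(f_1^*-\mathrm{Id})+1$. The block form $f=(\rho,\psi)$ together with the K\"unneth isomorphism decomposes $H^1(M)=H^1(\mathbb{T}^3)\oplus H^1(N)$ and $f_1^*=\rho_1^*\oplus\psi_1^*$, so it is enough to compute the two kernels separately.

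The contribution from $N$ is automatically even. Since $\psi$ is a holomorphic diffeomorphism of the K\"ahler (respectively, hyperK\"ahler) manifold $N$, after complexification $\psi_1^*$ preserves the Hodge decomposition $H^1(N;\mathbb{C})=H^{1,0}\oplus H^{0,1}$ and commutes with the complex conjugation that swaps these two summands. Its fixed subspaces in $H^{1,0}$ and $H^{0,1}$ therefore have the same complex dimension, so $\dim_{\mathbb{R}}\ker(\psi_1^*-\mathrm{Id})$ is even.

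The heart of the argument, and the step I expect to be the main obstacle, is to prove $\ker(\rho_1^*-\mathrm{Id})=0$ on $H^1(\mathbb{T}^3;\mathbb{R})$. The condition $\rho^*\bigl(e^i(1)_{\rho(\underline{x})}\bigr)=e^i(0)_{\underline{x}}$ forces $\rho_1^*$, in the basis $\{[dx^1],[dx^2],[dx^3]\}$, to be the matrix $A(1)^{-1}A(0)$, where $A(t)$ is the coefficient matrix of the coframe \eqref{formei(t)}. This matrix is block diagonal: a $1\times 1$ block equal to $a_1(1)/a_1(0)$ and a $2\times 2$ ``rotation--scaling'' block of determinant $l(1)/l(0)$ whose eigenvalues are $\alpha\pm i\beta$ for the explicit $\alpha,\beta$ produced by multiplying the two $2\times 2$ blocks. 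A direct computation gives
\[
 \det(\rho_1^*-\mathrm{Id})\;=\;\bigl(1-a_1(1)/a_1(0)\bigr)\,\bigl((1-\alpha)^2+\beta^2\bigr),
\]
and I must exclude both factors vanishing. Since $\rho$ descends to a diffeomorphism of $\mathbb{T}^3=\mathbb{R}^3/\mathbb{Z}^3$, the matrix $\rho_1^*$ is conjugate to an element of $\mathrm{GL}(3,\mathbb{Z})$; combined with positivity of $a_1$ and $l$, this forces $\det\rho_1^*=1$, i.e.\ $a_1(0)l(0)=a_1(1)l(1)$. On the other hand, integrating the second condition in \eqref{eqn:mt} yields $1/l(1)-1/l(0)=c\int_0^1 a_1(t)\,dt\neq 0$, so $l(0)\neq l(1)$. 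These two facts immediately give $a_1(1)/a_1(0)\neq 1$; moreover $(\alpha,\beta)=(1,0)$ would force $\alpha^2+\beta^2=l(1)/l(0)=1$, again contradicting $l(0)\neq l(1)$. Hence $\ker(\rho_1^*-\mathrm{Id})=0$.

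Putting the pieces together, $b_1(M_f)=0+(\textrm{even})+1$ is odd. The two stated consequences follow at once from classical obstructions: every compact K\"ahler manifold has even first Betti number by Hodge theory, and any compact complex manifold satisfying the $dd^c$-lemma has the Hodge symmetry $h^{1,0}=h^{0,1}$, giving even $b_1$ as well.
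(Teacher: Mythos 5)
Your proof is correct, but it reaches the conclusion by a genuinely different route than the paper in both of the two key steps. For the torus factor, you compute $\ker(\rho_1^*-\mathrm{Id})$ directly: you read off $\rho_1^*=A(1)^{-1}A(0)$ from the coframe condition, use integrality of the induced map on $H^1(\mathbb{T}^3;\mathbb{Z})$ together with positivity of $a_1$ and $l$ to get $\det\rho_1^*=1$, i.e.\ $a_1(0)l(0)=a_1(1)l(1)$, and then integrate $\bigl(\tfrac{1}{l}\bigr)'=c\,a_1$ to force $l(0)\neq l(1)$, which kills both factors of $\det(\rho_1^*-\mathrm{Id})$. The paper instead imports this vanishing from Lemma \ref{lemma:splitmt}: there $\mathbb{T}^3_\rho$ is identified with an Inoue surface in the family $S_M$ (via non-exactness of the torsion form $H$ and the Apostolov--Gualtieri classification), and $b_1=1$ for such surfaces gives $\ker(\rho_1^*-\mathrm{Id})=0$. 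Your argument is more elementary and self-contained; the paper's is shorter given the machinery already established. For the factor $N$, you argue that $\psi_1^*$ preserves the Hodge decomposition $H^1(N;\mathbb{C})=H^{1,0}\oplus H^{0,1}$ and commutes with the conjugation swapping the summands, so the fixed subspace is even-dimensional; this uses only holomorphy of $\psi$. The paper instead passes to harmonic representatives, uses that $\psi$ is an isometry (so commutes with the Laplacian) to show a fixed class has a genuinely $\psi^*$-fixed harmonic representative, and then observes that $J$ acts on the fixed harmonic $1$-forms, making the fixed space a complex vector space. The two arguments are essentially dual to each other; yours is slightly more economical in its hypotheses (it does not need $\psi^*\omega=\omega$), while the paper's stays within real harmonic theory. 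The final deductions (even $b_1$ for K\"ahler manifolds, and for manifolds satisfying the $dd^c$-lemma via Hodge symmetry) match the paper's.
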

\begin{proof}
Recall that
\[
 b_1(M_f)=1+\dim(\ker(\rho_1^*-Id))+ \dim (\ker(\psi_1^*-Id)).
\]
Since $\mathbb{T}^3_\rho$ is biholomorphic to an Inoue surface in the family $S_M$  it follows that    $\dim(\ker(\rho_1^*-Id))=0$. Hence, we reduce to show that the vector subspace $$\dim (\ker(\psi_1^*-Id))=\{[\alpha]\in H^1(N)\ | \ \psi_1^*([\alpha])=[\alpha]\}\subset H^{1}(N) $$ has even dimension. \\
Let $[\alpha]$ be in $\ker(\psi_1^*-Id)$. Without loss of generality, we may assume that $\alpha$ is the harmonic representative of its cohomology class, as $N$ is compact. As $[\alpha]$ is in  $\ker(\psi_1^*-Id)$, $\psi^*(\alpha)=\alpha + d \eta$ for some smooth function $\eta$ on $N$. Let us denote by $( \cdot, \cdot)$ the $L^2$ product defined on $(N,k)$ where $k$ is the K\"ahler (respectively, hyperK\"ahler) metric  on $N$.  We claim $d\eta$ to be zero. Indeed,
\[
\begin{split}
0&=(\psi^*(\Delta \alpha), d \eta)=(\Delta(\psi^*\alpha), d \eta)=(\Delta(\alpha+d\eta), d \eta)=\\
&=(\Delta(d\eta), d \eta)=(dd^*d\eta, d\eta)=(d^*d\eta, d^*d\eta)= \norm{d^*d\eta}^2,\\
\end{split}
\]
where the second equality holds since the pullback by an isometry commutes with the laplacian operator.\\
Since $d^*d\eta=0$, then $\Delta(d\eta)=0$. Hence, $\alpha$ and $\alpha+d\eta$ are two harmonic representatives of the same cohomology class. By uniqueness, $d\eta=0$.\\
We just proved that, if $[\alpha]$ is in the $\ker(\psi_1^*-Id)$ and $\alpha$ is the harmonic representative, then $\psi^*\alpha=\alpha$. \\
Recall that on K\"ahler manifolds, $J$ induces a map
\begin{align*}
    J: \cal{H}^1(N) &\to \cal{H}^1(N), \beta \mapsto J\beta,
\end{align*}
where $J \beta(X)=\beta (JX)$ and $\cal{H}^1(N)$ is the vector space of harmonic $1$-forms. Observe that the latter map is well defined since $J$ commutes with the laplacian operator. 
In the hyperK\"ahler case we may take $J=J_1$. \\
If $\alpha$ is the harmonic representative of $[\alpha] \in \ker(\psi^*_1-Id)$, then also $[J\alpha]$ is in $\ker(\psi^*_1-Id)$. Indeed, if we apply $\psi^*$ to $J\alpha$ we get
\[
   \psi^*(J\alpha)=J(\psi^*\alpha)=J\alpha.
\]
This concludes the proof of the first statement, since up to choose harmonic representatives, both $[\alpha]$ and $[J\alpha]$ are in $\ker(\psi_1^*-Id)$. \\
The proof of the second statement is an obvious consequence of the oddity of $b_1(M)$. By \cite{PG}, a compact complex manifold can satisfy the $dd^c$-lemma only if the first Betti number of the manifold is even. The thesis follows. 
\end{proof}

\begin{remark} An alternative proof of the Proposition \ref{proposition:nokahler} can be done by adapting in our setting the arguments used by Bazzoni, Lupton and Oprea in Proposition 2.3 and Corollary 2.4 of the work \cite{BLO}. 
\end{remark}
In order to study the formality of $M_f$  we first  prove  the following  Lemma.
\begin{lemma}
The $4$-dimensional generalized K\"ahler mapping tori  ${\mathbb T}^3_{\rho}$ constructed in Lemma \ref{lemma:splitmt} are formal.
\end{lemma}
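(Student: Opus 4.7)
The strategy is to produce a minimal CDGA with zero differential that is quasi-isomorphic to $\Omega^*(\mathbb{T}^3_\rho)$; such a CDGA is tautologically formal via the identity map onto its own cohomology. By Lemma \ref{lemma:splitmt}, $\mathbb{T}^3_\rho$ is biholomorphic to an Inoue surface of type $S_M$, which admits the almost abelian solvmanifold description of Remark \ref{remark:solvmanifold}; so one may work with the global coframe $\{f^1,f^2,f^3,f^4\}$ satisfying the structure equations \eqref{eqn:maurer}, and try to recognize enough closed invariant forms to span the cohomology.

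First, compute the de Rham cohomology using the mapping-torus exact sequence from Section 2. The spectrum of $\rho_1^*$ on $H^1(\mathbb{T}^3)$ is $\{e^{t_0},e^{-t_0/2\pm it_0 p}\}$; the spectrum of $\rho_2^*$ on $H^2(\mathbb{T}^3)$ consists of the pairwise products of these eigenvalues; and $\rho_3^*=\mathrm{Id}$ because $\det\rho=1$. None of the eigenvalues in the first two cases equals $1$, while $\rho_3^*-\mathrm{Id}$ has kernel of dimension $1$. Combined with the $b_1$ computation from Example \ref{example:basis}, this yields $b_0=b_1=b_3=b_4=1$ and $b_2=0$.

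Next, using \eqref{eqn:maurer} one checks that $df^4=0$ and, via a short cancellation involving the $\pm\tfrac{1}{2}$ and $\pm p$ terms in $df^2$ and $df^3$, that $d(f^1\wedge f^2\wedge f^3)=0$. The class $[f^4]\in H^1$ is non-trivial because $f^4$ is (up to rescaling) the pullback of the generator of $H^1(S^1)$ via $\pi$; the class $[f^1\wedge f^2\wedge f^3]\in H^3$ is non-trivial because if $f^1\wedge f^2\wedge f^3=d\eta$ then Stokes' theorem would force $\int_{\mathbb{T}^3_\rho} f^1\wedge f^2\wedge f^3\wedge f^4=0$, contradicting that the integrand is a nowhere-vanishing top form. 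Poincaré duality then implies $[f^4]\smile[f^1\wedge f^2\wedge f^3]$ generates $H^4$, so together with $b_2=0$ one concludes $H^*(\mathbb{T}^3_\rho)\cong\Lambda(a,v)$ as graded algebras with $|a|=1$, $|v|=3$.

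Finally, define $\varphi\colon(\Lambda(a,v),0)\to(\Omega^*(\mathbb{T}^3_\rho),d)$ by $a\mapsto f^4$ and $v\mapsto f^1\wedge f^2\wedge f^3$. It is a CDGA morphism since both images are closed, and a quasi-isomorphism by the previous paragraph. The source is a minimal CDGA (trivially, as $d=0$ and the odd generators have increasing degrees), hence it is the minimal model of $\mathbb{T}^3_\rho$; the vanishing of the differential makes formality immediate, since the identity map $(\Lambda(a,v),0)\to(H^*(\Lambda(a,v)),0)$ is itself the required quasi-isomorphism. The main, yet modest, obstacle is the closedness check for $f^1\wedge f^2\wedge f^3$: it is the only place where the structure equations \eqref{eqn:maurer} play a non-trivial role, and everything else reduces to linear algebra, Stokes' theorem, and Poincaré duality. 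An alternative route is to apply the Bazzoni--Fernández--Muñoz criterion recalled in Section 2 with $p=3$ and $r=1$: it produces directly two closed minimal generators in degrees $1$ and $3$, and the Betti number computation above precludes any further low-degree generator, yielding the same minimal model.
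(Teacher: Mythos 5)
Your proposal is correct and follows essentially the same route as the paper: identify $H^*(\mathbb{T}^3_\rho)$ as the exterior algebra on one generator in degree $1$ and one in degree $3$, represent them by the closed global forms $\theta=f^4$ and $e^1\wedge e^2\wedge e^3=f^1\wedge f^2\wedge f^3$, and observe that the resulting minimal model $\bigl(\bigwedge(a)\otimes\bigwedge(b),\,d=0\bigr)$ is tautologically formal. The only difference is that you compute the Betti numbers directly from the mapping-torus exact sequence and the eigenvalues of $\rho^*$ (a correct and self-contained computation), whereas the paper simply cites the known de Rham cohomology of Inoue surfaces of type $S_M$ from \cite{AS}.
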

\begin{proof}  
It follows from the fact that $(\mathbb{T}^3_\rho, \omega_\pm, J_\pm)$ is biholomorphic to an Inoue surface in the family $S_M$.  Indeed,  the  Rham cohomology of $\mathbb{T}^3_\rho$  is completely described in \cite{AS} and it is given by
\begin{equation} \label{equation:derhammt}
  H^{\bullet} (\mathbb{T}^3_\rho) = \mathbb{R}\langle 1 \rangle  \oplus \mathbb{R}\langle [\theta] \rangle \oplus \mathbb{R}\langle [e^{123}] \rangle \oplus \mathbb{R}\langle [\theta \wedge e^{123}] \rangle.  
\end{equation}
Therefore,  
\[
   \big( (\bigwedge V= \bigwedge (a) \otimes \bigwedge (b),d),\varphi\big),
\]
where $|a|=1$, \ $|b|=3$, \ $da=db=0$  and 
\[
 \varphi(a)=\theta, \ \  \varphi(b)=e^1 \wedge e^2 \wedge e^3,
\]
is the minimal model of $S_M$. If we define $\nu: (\bigwedge V,d) \to (H^{\bullet} (\bigwedge V,d),0)$ to be the CDGA quasi isomorphism sending $a \mapsto [a]$ and $b \mapsto [b]$, then the induced map in cohomology  is the identity.
\end{proof}

By using the previous Lemma, we can prove the following 
 \begin{corollary} \label{corollary:cor1}
If the diffeomorphism $\psi$  in Theorem \ref{lemma:2} is the identity map, then $(M_f, I_\pm)$ is biholomorphic to
 $(N \times \mathbb{T}^3_\rho, J \oplus J_\pm)$ with $J$ being the complex structure on the K\"ahler manifold $N$ and $J_\pm$  being the the complex structures on $\mathbb{T}^3_\rho$ described in Lemma \ref{lemma:splitmt}.Therefore $M_f$ is formal and 
\[
 H^k (M_f)= \bigoplus_{\substack{i+j=k}} H^i (N)\otimes H^j (\mathbb{T}^3_ \rho), \quad 
 H^{p,q}_{\Bar{\partial}_\pm}(M_f)= \bigoplus_{\substack{a+c=p,\\ b+d= q}} H^{a,b}_{\Bar{\partial}_i}(N)\otimes H^{c,d}_{\Bar{\partial}_\pm}(\mathbb{T}^3_ \rho).
\]

\end{corollary}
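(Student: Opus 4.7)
The plan is to exhibit an explicit biholomorphism, then invoke standard formality and Künneth results. When $\psi=\mathrm{id}_N$, the diffeomorphism $f$ defining the mapping torus is $f(\underline{x},p)=(\rho(\underline{x}),p)$, so the identification only touches the $\mathbb{T}^3$--factor and the $N$--coordinate plays the role of a parameter. Concretely, I would define the map
\[
\Phi\colon \mathbb{T}^3\times N\times[0,1]\longrightarrow N\times\bigl(\mathbb{T}^3\times[0,1]\bigr),\qquad (\underline{x},p,t)\longmapsto (p,\underline{x},t),
\]
and observe that it sends the equivalence relation $(\underline{x},p,0)\sim(\rho(\underline{x}),p,1)$ onto the relation $(p,\underline{x},0)\sim(p,\rho(\underline{x}),1)$, so that $\Phi$ descends to a diffeomorphism $\widetilde{\Phi}\colon M_f\to N\times \mathbb{T}^3_\rho$.

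Next I would check that $\widetilde{\Phi}$ intertwines $I_\pm$ with $J\oplus J_\pm$. This is almost tautological from the construction in Theorem \ref{lemma:2}: under $\widetilde{\Phi}$, the involutive distribution $D$ is sent to the tangent distribution of the $N$--factor, while the global frame $\{e_1,e_2,e_3,\partial/\partial\theta\}$ is sent to the corresponding frame on the $\mathbb{T}^3_\rho$--factor. On the $N$--factor the definition $I_\pm|_D=J$ gives precisely $J$, and on the $\mathbb{T}^3_\rho$--factor the relations
\[
I_\pm(e_1)=\tfrac{\partial}{\partial\theta},\quad I_\pm(e_2)=\pm e_3,\quad I_\pm(e_3)=\mp e_2
\]
coincide with the defining relations of the complex structures $J_\pm$ on $\mathbb{T}^3_\rho$ from Lemma \ref{lemma:splitmt}. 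Hence $\widetilde{\Phi}$ is a biholomorphism $(M_f,I_\pm)\to(N\times\mathbb{T}^3_\rho,J\oplus J_\pm)$.

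Once this identification is in place, formality and the cohomological decompositions follow from standard product results. For formality: $N$ is compact Kähler, hence formal by \cite{DGMS}; $\mathbb{T}^3_\rho$ is formal by the preceding Lemma; and the product of formal CDGAs is formal, since the minimal model of a product is the tensor product of the minimal models and the formality quasi-isomorphism tensors accordingly. The de Rham identity is the classical Künneth theorem applied to the product $N\times\mathbb{T}^3_\rho$, while the Dolbeault identity is the holomorphic Künneth formula for the product complex structure $J\oplus J_\pm$, which applies because both factors are compact complex manifolds.

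The main conceptual point — and the only one requiring any verification beyond routine checks — is the compatibility of complex structures in the second paragraph; everything else is bookkeeping on definitions and a citation of well-known product theorems. I do not foresee a genuine obstacle: the hypothesis $\psi=\mathrm{id}_N$ is exactly what makes the mapping-torus construction degenerate to a direct product, and the complex structures $I_\pm$ were defined blockwise in precisely the manner needed for the decomposition $J\oplus J_\pm$ to hold.
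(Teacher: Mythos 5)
Your proposal is correct and follows essentially the same route as the paper: the same swap map $[(\underline{x},p,t)]\mapsto(p,[(\underline{x},t)])$, the same blockwise verification that it intertwines $I_\pm$ with $J\oplus J_\pm$ (the paper just writes this out as explicit block matrices for $\varphi_*$ and the complex structures rather than arguing it is tautological from the definitions), and the same appeal to formality of products and the de Rham/Dolbeault K\"unneth formulas.
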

\begin{proof}
Let $\varphi$ be the diffeomorphism
\begin{align*}
    \varphi: M_f &\to N \times \mathbb{T}^3_\rho, \,\,
            [(\underline{x},p,t)] \mapsto (p,[(\underline{x},t)])
\end{align*}
with inverse map 
\begin{align*}
    \varphi^{-1}:  N \times \mathbb{T}^3_\rho &\to M_f,    \, \, 
             (p,[(\underline{x},t)]) \mapsto [(\underline{x},p,t)] .
\end{align*}
Note that both $\varphi$ and $\varphi^{-1}$ are well defined maps since
\[
\begin{split}
    & \varphi([(\underline{x},p,0)]=(p,[(\underline{x},0)])=(p,[(\rho(\underline{x}),1)])=\varphi([(\rho(\underline{x}),p,1)]=\varphi([(f(\underline{x},p),1)].
\end{split}
\]
To prove the statement it suffices to check that $\varphi$ is actually a biholomorphism. \\
In any point $q=[(\underline{x},p,t)]$, $\varphi_*$ identifies the tangent space $T_q M_f= \langle e_{iq}, D_q , \partial \theta_{q}\rangle$ with $T_p N \oplus T_{[(\underline{x},t)]}\mathbb{T}^3_\rho= T_p N \oplus \langle e_{i[(\underline{x},t)]},  \partial \theta_{[(\underline{x},t)]}\rangle $. Observe that $D_q \cong T_p N$. With respect to the decomposition above,
\[
\begin{split}
\varphi_*&=
\begin{pmatrix}
 \mathbf{0}_{4k,3} & I_{4k} & \underline{\mathbf{0}}_{4k,1} \\
 I_{3} & \mathbf{0}_{3,4k} &  \underline{\mathbf{0}}_{3,1}  \\
 \underline{\mathbf{0}}_{1,3} & \underline{\mathbf{0}}_{1,4k} & 1\\
\end{pmatrix}, \ \ \ \ 
\varphi^{-1}_* =
\begin{pmatrix}
 \mathbf{0}_{3,4k} & I_{3} & \underline{\mathbf{0}}_{3,1} \\
 I_{4k} & \mathbf{0}_{4k,3} &  \underline{\mathbf{0}}_{4k,1}  \\
 \underline{\mathbf{0}}_{1,4k} & \underline{\mathbf{0}}_{1,3} & 1\\
\end{pmatrix}, \\
I_\pm &=
\begin{pmatrix}
 \pm M_{3,3} & \mathbf{0}_{3,4k} & \underline{v}_{3,1} \\
 \mathbf{0}_{4k,3} & J &  \underline{\mathbf{0}}_{4k,1}  \\
 \underline{v}_{1,3} & \underline{\mathbf{0}}_{1,4k} & 0\\
\end{pmatrix}, \  \ \ \ 
J \oplus J_\pm=
\begin{pmatrix}
 J & \mathbf{0}_{4k,3} & \underline{\mathbf{0}}_{4k,1}  \\
 \mathbf{0}_{3,4k} & \pm M_{3,3} &  \underline{v}_{3,1}  \\
 \underline{\mathbf{0}}_{1,4k} & \underline{v}_{1,3} & 0\\
\end{pmatrix}
\end{split}
\]
where
\[
\begin{split}
    & M_{3,3}:=
    \begin{pmatrix}
     0 & 0 & 0\\
     0 &0 & -1 \\
     0 & 1 & 0 \\
    \end{pmatrix}, \ \ \ \ 
    \underline{v}_{3,1}:= 
    \begin{pmatrix}
        -1 \\
        0 \\
        0 \\
    \end{pmatrix} \ \ \text{and}\ \ 
    \underline{v}_{1,3}:= 
    \begin{pmatrix}
        1 & 0 & 0
    \end{pmatrix}. 
\end{split}
\]
We compute
 \begin{align*}
&\varphi_* \circ I_\pm = 
\begin{pmatrix}
\mathbf{0}_{4k,3} & J & \underline{\mathbf{0}}_{4k,1}\\
 \pm M_{3,3} & \mathbf{0}_{3,4k} &  \underline{v}_{3,1}  \\
 \underline{v}_{1,3} & \underline{\mathbf{0}}_{1,4k} & 0\\
\end{pmatrix}= (J \oplus J_\pm)\circ \varphi_*, \\
&\varphi^{-1}_* \circ (J \oplus J_\pm) = 
\begin{pmatrix}
\mathbf{0}_{3,4k} & \pm M_{3,3} & \underline{v}_{3,1}\\
 J & \mathbf{0}_{4k,3} &  \underline{\mathbf{0}}_{4k,1} \\
 \underline{\mathbf{0}}_{1,4k} & \underline{v}_{1,3} & 0\\
\end{pmatrix}= I_\pm \circ \varphi^{-1}_*.\\
\end{align*}
Since $M_f= N\times \mathbb{T}^3_\rho$ is the product of two formal manifolds, then $M_f$ is formal itself.  The expression of the De Rham and Dolbeault cohomology of $M_f$  follows by K\"unneth formulas.
\end{proof}
\begin{remark} Observe that the previous Theorem is true also in the hyperK\"ahler case. Indeed, in the first part one can adapt the previous proof setting 
\[
I_\pm =
\begin{pmatrix}
 \pm M_{3,3} & \mathbf{0}_{3,4k} & \underline{v}_{3,1} \\
 \mathbf{0}_{4k,3} & J_i &  \underline{\mathbf{0}}_{4k,1}  \\
 \underline{v}_{1,3} & \underline{\mathbf{0}}_{1,4k} & 0\\
\end{pmatrix}, \  \ \ \ 
J_i \oplus J_\pm=
\begin{pmatrix}
 J_i & \mathbf{0}_{4k,3} & \underline{\mathbf{0}}_{4k,1}  \\
 \mathbf{0}_{3,4k} & \pm M_{3,3} &  \underline{v}_{3,1}  \\
 \underline{\mathbf{0}}_{1,4k} & \underline{v}_{1,3} & 0\\
\end{pmatrix}.
\]
Hence, if in Theorem \ref{lemma:1} we fix $\psi=Id$, then $(M_f, I_\pm)$ is biholomorphic to $(N \times \mathbb{T}^3_\rho, J_i \oplus J_\pm)$, with $i=1,2$. \\
The proof of the formality holds without any change, since it does not depend on the complex structures but only on the K\"ahler property of the manifold. \\
\end{remark}

\section{Dolbeault cohomology} 

In this section we prove  that the generalized Kahler mapping tori  $M_f$,  constructed as in Theorems \ref{lemma:2} and \ref{lemma:1}, is the total space of a holomorphic fibre bundle $p:M_f \to \mathbb{T}^3_{\rho}$ with fibre $N$. To such a fibre bundle is always possible to associate the Borel spectral sequence, which relates the Dolbeault cohomology of the total space $M_f$ with that of the base space $\mathbb{T}^3_\rho$ and of the fibre $N$. \\
This provides a generalization of the trivial case when  the diffeomorphism  of $N$ is the identity map. \\ 
We first recall the following Theorem of A. Borel contained in \cite[Appendix II]{HB}. 
\begin{theorem} \label{theorem:Borel}
Let $p:T \to B$ be a holomorphic fibre bundle, with compact connected fibre $F$ and $T$ and $B$ connected. Assume that $F$ is K\"ahler. Then there exists a spectral sequence $(E_r,d_r)$, with $d_r$ being the restriction of the debar operator $\overline{\partial}$ of $T$ to $E_r$, satisfying the following properties:
\begin{itemize}
    \item $E_r$ is $4$-graded by the fibre degree, the base degree and the type. Let $^{p,q}E_r^{u,v}$ be the subspace of elements of $E_r$ of type $(p,q)$, fibre degree $u$ and base degree $v$. We have that $^{p,q}E_r^{u,v}=0$ if $p+q \neq u+v$ or if one of $p,q,u,v$ is negative. Moreover, $d_r$ maps $^{p,q}E_r^{u,v}$ into $^{p,q+1}E_r^{u+r,v-r+1}$. \\
    \item If $p+q=u+v$
    \[
    ^{p,q}E_2^{u,v}=\sum_{k} H^{k,u-k}_{\overline{\partial}}(B) \otimes H^{p-k,q-u+k}_{\overline{\partial}}(F).
    \]
    \item The Borel spectral sequence converges to $H_{\overline{\partial}}(T)$.
\end{itemize}
\end{theorem}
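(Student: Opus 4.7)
The plan is to build a filtration of the Dolbeault complex $\bigl(\Omega^{\bullet,\bullet}(T), \bar\partial\bigr)$ whose associated spectral sequence has the stated $E_2$ page and converges to $H_{\bar\partial}(T)$. Since $p:T\to B$ is a holomorphic fibre bundle, each point of $B$ has a neighbourhood $U$ over which $p^{-1}(U)\cong U\times F$ biholomorphically, and the transition functions are holomorphic bundle automorphisms covering the identity on $B$. This allows me to define the \emph{base degree} of a form on $p^{-1}(U)$ as the total number of differentials $dz_B$ and $d\bar z_B$ appearing in its local expression; holomorphicity of the transitions makes this a globally well defined notion on $T$. I would then set $F^v\Omega^{p,q}(T)$ to be the subspace of $(p,q)$-forms with base degree at least $v$, giving a decreasing filtration by the base degree alone while keeping track of the bidegree and fibre degree as auxiliary gradings.

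First I would verify that $\bar\partial(F^v)\subset F^v$, so that $F^\bullet$ is a filtered subcomplex; this is straightforward since $\bar\partial$ either preserves or raises base degree, and holomorphicity of the transitions ensures compatibility across trivialisations. Next I would analyse the $E_0$ and $E_1$ pages: on the associated graded, $d_0$ reduces to the fibrewise operator $\bar\partial_F$, so using a locally finite cover and a \v{C}ech-type argument one identifies the $E_1$ page with the complex of base forms with coefficients in the sheaf $\mathcal{H}^{\bullet,\bullet}(F)$ of fibrewise Dolbeault cohomology groups, and the induced differential $d_1$ becomes the base operator $\bar\partial_B$ acting on the base factor. Taking cohomology yields an $E_2$ page of the form $H^{\bullet,\bullet}_{\bar\partial}\bigl(B,\mathcal{H}^{\bullet,\bullet}(F)\bigr)$, with the bigrading by fibre degree and base degree coming naturally out of the construction.

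The main obstacle, and the place where the K\"ahler hypothesis on $F$ is crucial, is to identify $\mathcal{H}^{a,b}(F)$ with the constant sheaf on $B$ of stalk $H^{a,b}_{\bar\partial}(F)$, thereby producing the tensor product $\sum_k H^{k,u-k}_{\bar\partial}(B)\otimes H^{p-k,q-u+k}_{\bar\partial}(F)$. K\"ahlerness of $F$ allows one to invoke Hodge theory to conclude that the Hodge numbers $h^{a,b}$ are invariant under the structure group action and that the Hodge decomposition is compatible with monodromy, so that no twisting survives in the tensor product formula. Finally, convergence of the spectral sequence to $H_{\bar\partial}(T)$ follows from the standard theorem for bounded filtrations, since $F^v$ vanishes for $v$ larger than the real dimension of $B$.
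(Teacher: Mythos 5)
First, a point of comparison: the paper does not prove this statement at all. It is quoted as a theorem of Borel from \cite[Appendix II]{HB}, so there is no internal argument to measure yours against; what you have written is an attempt to reconstruct Borel's own proof. Your overall architecture is indeed the standard one and matches Borel's: filter $\Omega^{\bullet,\bullet}(T)$ by the ideal generated by ($v$-fold products of) pullbacks of forms from $B$, check $\bar\partial$-stability, identify $E_1$ with base forms valued in fibrewise Dolbeault cohomology, and get convergence from boundedness of the filtration. Those parts are fine.

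There are, however, two genuine gaps at the decisive steps. First, the identification of the $E_1$ page is not ``a \v{C}ech-type argument'': the $d_0$-cohomology of the associated graded is the fibrewise $\bar\partial_F$-cohomology of a family, and to realize it as the space of smooth forms on $B$ with values in a finite-rank bundle, with $d_1$ equal to $\bar\partial_B$ and no correction terms, one needs relative Hodge theory along the fibres of the proper holomorphic submersion (smooth dependence of the Green operators and harmonic projectors on the base point). This is where compactness of $F$ does real analytic work, and your sketch passes over it. Second, and more seriously, your justification for the absence of twisting is a non sequitur. The invariance of the Hodge numbers under the structure group and the equivariance of the Hodge decomposition under monodromy do not imply that $\pi_1(B)$ acts trivially on the groups $H^{a,b}_{\bar\partial}(F)$ themselves: a holomorphic elliptic-curve bundle whose monodromy acts by $-1$ on $H^{1,0}$ of the fibre already satisfies everything you invoke, yet the coefficient system is visibly non-constant. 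In general the natural $E_2$ term involves cohomology with coefficients in the (possibly non-trivial) system $\mathbf{H}^{a,b}(F)$, and passing to the untwisted tensor product is exactly the delicate point where the K\"ahler hypothesis enters Borel's argument in a more structural way (through the fibrewise $\partial\bar\partial$-machinery and the comparison with the topological Leray--Serre sequence) than the dimension count you offer. As written, the sentence ``so that no twisting survives'' is the place where your proof would fail.
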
 

From now on we will use the following notation: $M_f^{\pm}=(M_f, I_\pm)$, $\mathbb{T}^{3\pm}_\rho=(\mathbb{T}^3_\rho, J_\pm)$, $N^i=(N,J_i)$, with $i=1,2$. Obviously, in the K\"ahler case $J_1=J_2=J$.\\
Let us define $p: M_f^{\pm} \to \mathbb{T}^{3\pm}_\rho$ to be the holomorphic surjection map, sending $[(\underline{x},q,t)]$ to $[(\underline{x},t)]$. \\
In any point $\xi=[(\underline{x},q,t)]$, a basis of the tangent space $T_\xi M_f$ is provided by $ \langle e_{i\xi}, D_\xi , \partial \theta_{\xi}\rangle$, while a basis of $T_{p(\xi)}\mathbb{T}^{3}$ is provided by $\langle e_{ip(\xi)}, \partial \theta_{p(\xi)}\rangle$. 
With respect to the decomposition above, 
\[
 p_*=\begin{pmatrix}I_{3,3} & \mathbf{0}_{3,4k} & \underline{\mathbf{0}}_{3,1} \\
 \underline{\mathbf{0}}_{1,3}  &
 \underline{\mathbf{0}}_{1,4k} &  1\\
\end{pmatrix}.
\]
One can easily check that $[p_* ,I_\pm] = 0$, where the corresponding matrices of $I_\pm$ and $J_\pm$ are those of Corollary \ref{corollary:cor1}. \\
We have to prove that for each class $[(\underline{x},q)]$ there exists a neighbourhood $U$ and a local holomorphic trivialization $\phi_U: p^{-1} (U)^{\pm} \to U^{\pm} \times N^i$.\\
Let us start with points of the kind $[(\underline{x},t)]$ with $t \neq 0,1$. Then a neighbourhood of these points is given by $V \times I$, where $V$ is an open neighbourhood of $\underline{x}$ in $\mathbb{T}^3$ and $I$ is an open neighbourhood of $t$ in $[0,1]$ not containing $0$ and $1$. \\
Clearly, $p^{-1}U=\{ [(\underline{x},q,t)]\ | \ q \in N \ \ \text{and} \ \ [(\underline{x},t)] \in U \}$ and the local trivialization is provided by 
\[
 \begin{split}
      \phi_U: p^{-1}(U)^{\pm} & \to U^{\pm} \times N^i\\
              [(\underline{x},q,t)] &\mapsto ([(\underline{x},t)],q).
 \end{split}
\]
Let us now consider points of the kind $[\underline{x},0]$. A neighbourhood of such points is given by $U=\pi_{\mathbb{T}^3_\rho}(V \times [0,\varepsilon) \sqcup \rho(V) \times (1-\varepsilon, 1])$ where $\pi_{\mathbb{T}^3_\rho}: \mathbb{T}^3 \times [0,1] \to \mathbb{T}^3_\rho$ and $V$ is an open neighbourhoud of $\underline{x}$. Then $p^{-1}(U)=\pi (V \times N \times [0,\varepsilon) \sqcup \rho(V) \times N \times (1-\varepsilon,1])$. We define a local trivialization on the representatives to be 
\[
 \begin{split}
      \phi_U: p^{-1}(U)^{\pm} & \to U^{\pm} \times N^i\\
              [(\underline{x},q,t)] &\mapsto 
    \begin{cases}
      ([\underline{x},t],q) & \text{if} \ \  t \in [0,\varepsilon) \\
     ([\underline{x},t],\psi^{-1} q)    &  \text{if} \ \  t \in (1-\varepsilon,1]
    \end{cases}.
 \end{split}
\]
Although the $\phi_U$ is defined on the representatives, it does not depend actually on the representative chosen. Indeed, we have to check that $[(\underline{x},q,0)]$ has the same image under $\phi_U$ of $[(\rho(\underline{x}),\psi(q),1)]$. By definition of $\phi_U$, 
\[
 \phi_U[(\rho(\underline{x}),\psi(q),1)]=([\rho(\underline{x}),1], \psi^{-1}\psi (q))=([\underline{x},0],q)=\phi_U([(\underline{x},q,0)]).
\]
The inverse map $\phi_U^{-1}$ is provided by
\[
 \begin{split}
      \phi_U^{-1}: U^{\pm} \times N^i& \to p^{-1}(U)^{\pm} \\
              ([(\underline{x},t)],q) &\mapsto 
    \begin{cases}
      ([\underline{x},q, t]) & \text{if} \ \  t \in [0,\varepsilon) \\
     [(\underline{x},\psi (q), t)]   &  \text{if} \ \  t \in (1-\varepsilon,1]
    \end{cases}.
 \end{split}
\]
Also in this case $\phi_U^{-1}$ is well defined, since
\[
 \phi^{-1}_U([(\rho(\underline{x}),1)], q)=[(\rho(\underline{x}),\psi (q),1)]=[(\underline{x},q, 0)]=\phi_U^{-1}([(\underline{x},0)],q).
\]
As $\phi_U$ and $\phi_U^{-1}$ do not depend on the representatives, without loss of generality  we may choose one of them.  Hence, let us fix the classes $[(\underline{x},q,t)]$ and $[(\underline{x},t)]$ represented by $(\underline{x},q,t)$ and $(\underline{x},t)$ with $t \in [0,\varepsilon)$, respectively. Then, by definition of both $\phi_U$ and $\phi_U^{-1}$ we have that 
\[ 
   \phi_U ([(\underline{x},q,t)])=([(\underline{x},t)],q) \ \ \text{and} \ \ \phi_U ([(\underline{x},t)],q)=[(\underline{x},q, t)],
\]
the holomorphy of both maps easily follows. \\
Since the fibre $N$ of the bundle $p:M_f^{\pm}\to \mathbb{T}^3_\rho$  is clearly compact and (hyper)K\"ahler, we have two associated Borel spectral sequences described in Theorem \ref{theorem:Borel}. In particular we have the following result 
\begin{theorem} \label{theorem:holfib}
If $M_f$ is the mapping torus constructed as in the Theorems \ref{lemma:2} and \ref{lemma:1}, then $M_f$ is the total space of the holomorphic fibre bundle
\[
   N \to M_f \to \mathbb{T}^3_\rho.
\]
Then we have two associated Borel spectral sequences $(E^{\pm}_r,d^{\pm}_r)$ satisfy the following properties:
\begin{itemize}
\item If $p+q=u+v$
    \[
    ^{p,q}E_2^{u,v  \pm }=\sum_{k} H^{k,u-k}_{\overline{\partial}_\pm}(\mathbb{T}^3_\rho) \otimes H^{p-k,q-u+k}_{\overline{\partial}_i}(N).
    \] 
\item The Borel spectral sequences converge respectively to $H_{\overline{\partial}_\pm}(M_f)$. \end{itemize} 
\end{theorem}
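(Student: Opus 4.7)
The plan is to first exhibit $M_f$ as the total space of a holomorphic fibre bundle over $\mathbb{T}^3_\rho$ with fibre $N$, and then to apply Borel's theorem (Theorem \ref{theorem:Borel}) separately to the two complex structures $I_+$ and $I_-$.

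First, I would define $p: M_f \to \mathbb{T}^3_\rho$ by $[(\underline{x},q,t)] \mapsto [(\underline{x},t)]$ and observe that this is well-defined because $f=(\rho,\psi)$ has product block structure, so the equivalence $(\underline{x},q,0) \sim (\rho(\underline{x}),\psi(q),1)$ in $M_f$ projects to $(\underline{x},0) \sim (\rho(\underline{x}),1)$ in $\mathbb{T}^3_\rho$. Using the explicit matrix descriptions of $I_\pm$ and $J_\pm$ already collected in the proof of Corollary \ref{corollary:cor1}, one checks immediately that $p_* \circ I_\pm = J_\pm \circ p_*$: indeed $p_*$ kills the $D$-block and is the identity on the $\langle e_i, \partial/\partial\theta\rangle$-block, while both $I_\pm$ and $J_\pm$ act by the same $4\times 4$ block on that subspace. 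Hence $p$ is holomorphic as a map $M_f^\pm \to \mathbb{T}^{3\pm}_\rho$.

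Next I would construct local holomorphic trivialisations. For points $[(\underline{x}_0,t_0)]$ with $t_0 \in (0,1)$, a small product neighbourhood $V \times I \subset \mathbb{T}^3_\rho$ lifts canonically to $V \times N \times I \subset M_f$, and the map $[(\underline{x},q,t)] \mapsto ([(\underline{x},t)],q)$ is a biholomorphism onto $(V\times I) \times N$. For points of the form $[(\underline{x}_0,0)]$, a neighbourhood has the form $U = \pi_{\mathbb{T}^3_\rho}\bigl(V \times [0,\varepsilon) \sqcup \rho(V) \times (1-\varepsilon,1]\bigr)$; on the $[0,\varepsilon)$ piece I trivialise with the identity on the fibre factor, and on the $(1-\varepsilon,1]$ piece with $\psi^{-1}$. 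Because $\psi \circ \psi^{-1} = \mathrm{id}_N$ and $f(\underline{x},q) = (\rho(\underline{x}),\psi(q))$, the two pieces agree on the gluing, so the trivialisation descends to $p^{-1}(U) \to U \times N$. The resulting transition functions take values in $\{\mathrm{id}_N,\psi^{\pm 1}\}$, hence are holomorphic automorphisms of $N$ by the hypothesis that $\psi$ is holomorphic with respect to the relevant complex structure on $N$. Having established that $p: M_f^\pm \to \mathbb{T}^{3\pm}_\rho$ is a holomorphic fibre bundle with compact, connected, (hyper)K\"ahler fibre, I can directly invoke Theorem \ref{theorem:Borel}. In the hyperK\"ahler case one takes on the fibre the complex structure $J_i$ matching the one used to define $I_\pm$ on $D$, namely $i = 1$ for $I_+$ and $i = 2$ for $I_-$. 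Borel's theorem then yields the two spectral sequences $(E_r^\pm,d_r^\pm)$ with the stated $E_2$-page and convergence to $H_{\overline{\partial}_\pm}(M_f)$.

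The principal technical obstacle is the verification that the local trivialisation near the gluing locus $t \in \{0,1\}$ is both well-defined and holomorphic: this rests entirely on $\psi$ being a holomorphic automorphism of $(N,J_i)$ for the correct index $i$. In the K\"ahler case this is immediate, since $N$ carries a single complex structure $J$ and $\psi$ is $J$-holomorphic by assumption. In the hyperK\"ahler case it is crucial that $\psi$ preserve \emph{every} $J_i$, which is precisely why the hypothesis of Theorem \ref{lemma:1} asks $\psi$ to preserve the whole hyperK\"ahler package rather than a single complex structure; without this, either the fibre complex structure on $M_f^+$ or that on $M_f^-$ would fail to match across the gluing, and the bundle would not be holomorphic simultaneously for both $I_+$ and $I_-$.
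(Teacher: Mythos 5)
Your proposal is correct and follows essentially the same route as the paper: the same projection $p([(\underline{x},q,t)])=[(\underline{x},t)]$, the same verification that $p_*$ intertwines $I_\pm$ with $J_\pm$ via the block matrices from Corollary \ref{corollary:cor1}, the same two types of trivialising neighbourhoods with the $\psi^{-1}$ correction on the $(1-\varepsilon,1]$ piece, and the same final appeal to Borel's theorem with the fibre complex structure $J_1$ (resp.\ $J_2$) for $I_+$ (resp.\ $I_-$). No substantive differences.
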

Note that when $\psi$ is the identity map, then we have already observed in  Corollary \ref{corollary:cor1} and in the following remark
that $M_f^{\pm}$ is biholomorphic to $N^i \times \mathbb{T}_{\rho}^{3\pm}$. In terms of our fibre bundle it is equivalent to say that the bundle is trivial. Since $d_r^{\pm}$ are the restriction of the debar operators $\overline{\partial}_\pm$ of $M_f$, we have that if $p+q=u+v$ then $d_2^{\pm}(^{p,q}E_2^{u,v \pm})= \overline{\partial}_\pm (^{p,q}E_2^{u,v \pm})=0$, as each element in the $E_2^\pm$ term is already a global closed form on $M_f$. It follows that the sequences both degenerate at $r=2$. This result is perfectly consistent with that described in Corollary \ref{corollary:cor1}, where we obtained the same result applying the K\"unneth formula.

\section{Explicit examples}

We will give now some explicit examples obtained applying the construction described in the previous sections. 

\begin{example} \label{example:identity} Let $\rho$ be the $\mathbb{T}^3$ diffeomorphism defined by \eqref{eqn:rho} and let $\{e_i\}$ and $\{e^i\}$ be the basis described in Example \ref{example:basis}. We have already observed in the description of Example \ref{example:basis} that $\rho$ preserves the basis $e^i(t)$ and one can easily check that $\big(\frac{1}{l(t)} \big)' \cdot \frac{1}{a_1}= (e^t)' \cdot e^{-t}=1 \neq 0$. 
We apply the Theorem \ref{lemma:1} in the case of $N$ being $4$-torus $\mathbb{T}^4$ endowed with the  standard flat hyperK\"ahler structure $(J_1,J_2,J_3, \omega_1, \omega_2, \omega_3)$, defined as follows
\[
\begin{split}
&J_1\bigg(\frac{\partial}{\partial x^4}\bigg)=\frac{\partial}{\partial x^5}, \quad J_1\bigg(\frac{\partial}{\partial x^6}\bigg)=-\frac{\partial}{\partial x^7},\\
&J_2\bigg(\frac{\partial}{\partial x^4}\bigg)=-\frac{\partial}{\partial x^7}, \quad J_2\bigg(\frac{\partial}{\partial x^5}\bigg)=\frac{\partial}{\partial x^6}, \\
&J_3=J_1J_2, \ \text{and} \ k=\sum_{i=4}^7 (dx^i)^2,
\end{split}
\]
where $(x^4,x^5,x^6,x^7)$ are the standard coordinates on $\mathbb{T}^4 \cong \mathbb{Z}^4 \backslash \mathbb{R}^4$ and $\psi$ being the identity map.
Note that  
\[
   M_f= \frac{\mathbb{T}^3 \times \mathbb{T}^4\times [0,t_0]}{(q,0) \sim (f(q),t_0) },
\]
where $f(\underline{x},\underline{y})=(\rho(\underline{x}),\underline{y})$. The only difference with the general case is that now the Cartesian product is done with $[0,t_0]$. \\
By Theorem \ref{lemma:1}, $M_f$ can be endowed with a non-split generalized Kahler structure $(g,I_\pm)$. Moreover, we may apply Corollary \ref{corollary:cor1}, to conclude that 
$(M_f, I_\pm)$ is a formal manifold biholomorphic to $(\mathbb{T}^4 \times \mathbb{T}^3_ \rho, J_i \oplus J_\pm)$. Furthermore, $M_f$ does not admit any K\"ahler metric and does not satisfy the $dd^c$-lemma.
\\
We can describe  $M_f$  also as a compact solvmanifold. Let us consider the $8$-dimensional unimodular almost abelian Lie group $G_{8}^{p,q}$, $q \in \mathbb{R}\setminus \{0\} $, with structure equations
\begin{equation} \label{eqn:maurer}
    \begin{split}
        & df^1=f^1 \wedge f^{8} \ , \  df^2= -\frac{1}{2} f^2 \wedge f^{8} + p f^3 \wedge f^{8}\ , \
        df^3=-p f^2\wedge f^{8} -\frac{1}{2}f^3 \wedge f^{8}, \\
        & df^{4}= q f^{5} \wedge f^{8},  df^{5}=-q f^{4} \wedge f^{8},\\
        & df^{6}= q f^{7} \wedge f^{8},  df^{7}=-q f^{6} \wedge f^{8},\\
        & df^{2n}=0.
    \end{split}
\end{equation}
$G_{p,q}^{8}$ is the semidirect product $\mathbb{R}^7 \rtimes_{\widetilde{\varphi}}\mathbb{R}$, where
\[
\begin{split}
    \widetilde{\varphi}(t)&=\begin{pmatrix}
              \varphi(t) &  0 & 0\\
              0 & R_q(t) & 0 \\
              0 & 0 & R_q(t) \\
              \end{pmatrix},
\end{split}
\]
with $\varphi$ defined as in the Remark \ref{remark:solvmanifold} and $R_q(t)$ being the rotation matrix
\[ 
R_q(t)= \begin{pmatrix}
  \cos(qt) & -\sin(qt) \\
  \sin(qt) & \cos(qt) \\   
\end{pmatrix}.
\]
We have already observed that for $t=t_0$, $\varphi=\rho(t_0)$ is similar to an integer matrix $A$. Hence, fixing $q=\frac{2\pi}{t_0}$, $\widetilde{  \varphi}$ is similar to the integer matrix $\operatorname{diag}(A, I_2, I_2)$ via an invertible matrix $\widetilde{P}$. The mapping torus constructed above is the quotient $\Gamma \backslash G_{p,q}^8$, where $\Gamma= \widetilde{P} \mathbb{Z}^7 \rtimes t_0 \mathbb{Z}$.
\end{example}

 Now we give an example which is not biholomorphic to the product of $N$ and ${\mathbb T}^3_{\rho}$.
\smallskip

\begin{example}
Let $\rho$, $\{e^i\}$ and $(\mathbb{T}^4, k,J_i)$ be defined as in the previous Example and let $\psi$ be the $\mathbb{R}^4$-rotation $(x^4,x^5,x^6,x^7)\mapsto (x^5,-x^4,x^7,-x^6)$.
Since $\psi$ is represented by an integer matrix, $\psi$ descend to a diffeomorphism of the flat torus $\mathbb{T}^4$. \\
We prove that $\psi$ is holomorphic with respect to $J_i$ and preserves the hyperkahler structure $(\mathbb{T}^4,k,J_i)$. Indeed, $[\psi_*, J_1] = [\psi_*, J_2]=0$
and
\[
\begin{split}
& \psi^* k= \psi^*\big(\sum_{i=4}^7 (dx^i)^2 \big)=(dx^5)^2+ (dx^4)^2 + (dx^7)^2 +(dx^6)^2=k, \\
& \psi^* (\omega_1)= \psi^* (dx^4 \wedge dx^5 - dx^6 \wedge dx^7)=\omega_1, \\
& \psi^* (\omega_2)= \psi^* (-dx^4 \wedge dx^7 + dx^5 \wedge dx^6)=\omega_2, \\
& \psi^* (\omega_3)= \psi^* (-dx^4 \wedge dx^6 - dx^5 \wedge dx^7)=\omega_3.
\end{split}
\]
Then
\[
   M_f= \frac{\mathbb{T}^3 \times \mathbb{T}^4\times [0,t_0]}{(q,0) \sim (f(q),t_0) },
\]
where $f(\underline{x},\underline{y})=(\rho(\underline{x}),\psi(\underline{y}))$, is a non-split generalized K\"ahler manifold by Theorem \ref{lemma:1}, which does not admit any K\"ahler metric and does not satisfy the $dd^c$-lemma.\\ 
Observe that the mapping torus $M_f$ can be constructed also as a  compact quotient of $G_{p,q}^{8}$, for $q=\frac{\pi}{2t_0}$ by a lattice $\Gamma$. More precisely, $G_{p,q}^{8}= \Gamma \backslash G_{p,q}^{8}$, where $\Gamma=\widetilde{P}\mathbb{Z}^7 \rtimes t_0 \mathbb{Z}$ and $\widetilde{P}$ being the matrix of change of basis between $\widetilde{\varphi}_{\frac{\pi}{2t_0}}(t_0)$ and $\operatorname{diag}(A, \Lambda, \Lambda)$ with
\[
\Lambda= \begin{pmatrix}
     0 & 1 \\
     -1 & 0 \\
\end{pmatrix}.
\]
Now we compute the De Rham cohomology of  $M_f$, using the fact that $M_f$ is the mapping torus  of $\mathbb{T}^3 \times \mathbb{T}^4 \cong \mathbb{T}^7$  by  the  diffeomorphism $f=(\rho,\psi)$ described above. \\

By \cite[Lemma 12]{BFM} the cohomology of degree $r$ is given by the isomorphism 
\[
   H^r(M_\varphi)=K^r \oplus C^{r-1},
\]
where $K^r=\ker (\varphi_r^*- Id)$ and $C^{r}=\operatorname{coker}(\varphi_r^*- Id)$. \\
Hence we need to fix a basis of the $r$-degree cohomology of $\mathbb{T}^3 \times \mathbb{T}^4\cong \mathbb{T}^7$, for each $r=1,\dots, 8$.\\
Recall that $H^{r}(\mathbb{T}^3 \times \mathbb{T}^4)=\langle \  [dx^{I} \wedge dx^{J}]\ | \ \abs{I} + \abs {J}=r \ \rangle,$ where $I$ is a multi-index of length $\abs{I}$  with indexes in $\{1,\dots,3\}$ and  $J$ is a multi-index of length $\abs{J}$  with indexes in $\{4,\dots,7\}$. \\
Note that on $dx^{I} \wedge dx^{J}$, $f=(\rho, \psi)$ acts in the following way:
\[
    f^*(dx^{I} \wedge dx^{J})=\rho^* (dx^{I}) \wedge \psi^*(dx^{J}).
\]
For any $(i,j)$ with $i+j \le 8$ consider the pull-back 
\[
      f_{(i,j)}^{*}=(\rho_i^*, \psi_j^*): H^{i}(\mathbb{T}^3) \otimes H^{j}(\mathbb{T}^4) \to H^{i}(\mathbb{T}^3) \otimes H^{j}(\mathbb{T}^4).
\]
Then, we can show that  for each $[\alpha \wedge \beta] \in H^{i}(\mathbb{T}^3) \otimes H^{j}(\mathbb{T}^4)$, \ $f_{(i,j)}^{*}[\alpha \wedge \beta]=[\alpha \wedge \beta]$ if and only if $\rho^*_{i}[\alpha]=[\alpha] $ and $\psi^*_{j}[\beta]=[\beta]$.
Indeed,  by contradiction, let us assume that for each $[\alpha \wedge \beta] \in H^{i}(\mathbb{T}^3) \otimes H^{j}(\mathbb{T}^4)$, \ 
\[f_{(i,j)}^{*}[\alpha \wedge \beta]=[\alpha \wedge \beta] \ \  \wedge \ \ \rho^*_{i}[\alpha]\neq[\alpha].\] 
Let $[\beta]$ be a non-zero class in $H^0(\mathbb{T}^4)$. Then clearly $\psi_0^*[\beta]=[\beta]$. By hypothesis 
\[
   \beta \cdot f_{(i,0)}^{*}[\alpha]= \beta \cdot \rho_i^{*}[\alpha]= \beta \cdot [\alpha].
\]
Since $\beta$ is non-zero, this provides a contradiction. The converse is obvious. 

In other words we have that if in degree $(i,j)$ either  $\rho^*_i(dx^{I}) \neq dx^{I}$ for each $\abs{I}=i$ or $\psi^*_j(dx^{J}) \neq dx^{J}$ for each $\abs{J}=j$, then $f^*_{(i,j)}-Id$ has trivial kernel and cokernel. \\
In the following we will denote by $K^r_{\mathbb{T}^3}$ and  $C^r_{\mathbb{T}^3}$ the $\ker$ and the $\operatorname{coker}$ of $\rho^*_r-Id$, respectively and by $K^r_{\mathbb{T}^4}$ and  $C^r_{\mathbb{T}^4}$ the $\ker$ and the $\operatorname{coker}$ of $\psi^*_r-Id$, respectively. By the latter remark, we have that $K^{(i,j)}=K^i_{\mathbb{T}^3} \wedge K^j_{\mathbb{T}^4}$.\\
Since $\mathbb{T}^3_\rho$ is an Inoue surface in the family $S_M$,  we get 
\[
\begin{split}
    & K^0_{\mathbb{T}^3}= C^0_{\mathbb{T}^3}= \langle 1 \rangle,  \ \ \ 
    K^1_{\mathbb{T}^3}= C^1_{\mathbb{T}^3}= 0, \\
    & K^2_{\mathbb{T}^3}= C^2_{\mathbb{T}^3}= 0, \ \ \ \ 
     K^3_{\mathbb{T}^3}= C^3_{\mathbb{T}^3}= \langle [e^{123}] \rangle, \\
\end{split}
\]
Moreover, if we compute in each degree $K^r_{\mathbb{T}^4}=\ker(\psi_r^*-Id)$ and $C^r_{\mathbb{T}^4}=\operatorname{coker}(\psi_r^*-Id)$ with respect to the basis $\{[dx^J]\}$ we obtain 
\[
\begin{split}
     &K^0_{\mathbb{T}^4}= C^0_{\mathbb{T}^4}= \langle 1 \rangle, \ \ \ K^1_{\mathbb{T}^4}= C^1_{\mathbb{T}^4}= 0,  \\
     &K^2_{\mathbb{T}^4}= C^2_{\mathbb{T}^4}= \langle \ [dx^{45}], [dx^{46}+dx^{57}], [dx^{47}-dx^{56}], [dx^{67}] \ \rangle,\\
     &K^3_{\mathbb{T}^4}= C^3_{\mathbb{T}^4}= 0, \ \ \  K^4_{\mathbb{T}^4}= C^4_{\mathbb{T}^4}= \langle \  [dx^{4567}] \ \rangle. \\
\end{split}
\]
We may now compute in each degree $r=0,\dots,8$ the spaces $K^r$ and $C^r$, which are provided by
\[
\begin{split}
&K^0=C^0=\langle 1 \rangle, \ \ \ K^1=C^1=0, \\ &K^2=K^2_{\mathbb{T}^4}=C^2_{\mathbb{T}^4}=C^2=\langle \ [dx^{45}], [dx^{46}+dx^{57}], [dx^{47}-dx^{56}], [dx^{67}] \ \rangle, \\
&K^3=K^3_{\mathbb{T}^3}=C^3_{\mathbb{T}^3}=C^3=\langle [e^{123}] \rangle, \\
& K^4= K^4_{\mathbb{T}^4}=C^4_{\mathbb{T}^4}=C^4=\langle [dx^{4567}] \rangle, \\
& K^5=K^3_{\mathbb{T}^3} \wedge K^2_{\mathbb{T}^4}= C^5= \langle [e^{123}\wedge dx^{45}], [e^{123} \wedge (dx^{46}+ dx^{57})],\\
&[e^{123} \wedge (dx^{47}- dx^{56})], [e^{123} \wedge dx^{67}]\rangle, \\
& K^6=C^6=0,  \ \ \ K^7=K^3_{\mathbb{T}^3}\wedge K^4_{\mathbb{T}^4}=C^7=\langle [e^{123}\wedge dx^{4567}] \rangle.
\end{split}
\]
The computation of $H^*(M_f)$ is now trivial,   
\begin{align*}
    H^1 (M) &=  \langle [\theta]   \rangle, \\
    H^2 (M) &=  \langle  [dx^{45}],[dx^{46}+dx^{57}],[dx^{47}-dx^{56}] ,[dx^{67}]  \rangle,\\
    H^3 (M) &=  \langle  [e^{123}], [\theta \wedge dx^{45}], [\theta \wedge (dx^{46}+dx^{57})],[\theta \wedge (dx^{47}-dx^{56})] ,[\theta \wedge dx^{67}] \rangle , \\
    H^4 (M) &=  \langle  [dx^{4567}],[\theta \wedge e^{123}]\rangle,\\
    H^5 (M) &=  \langle  [e^{123}\wedge dx^{45}], [e^{123} \wedge (dx^{46}+ dx^{57})],[e^{123} \wedge (dx^{47}-dx^{56})], [e^{123} \wedge dx^{67}],[\theta \wedge dx^{4567}]\rangle,\\
    H^6 (M) &=  \langle [\theta \wedge e^{123}\wedge dx^{45}], [\theta \wedge e^{123} \wedge (dx^{46}+ dx^{57})],[\theta \wedge e^{123} \wedge ( dx^{47}-dx^{56})], [\theta \wedge e^{123} \wedge dx^{67}] \rangle,\\
    H^7 (M) &=  \langle [e^{123}\wedge dx^{4567}] \rangle,\\
    H^8 (M) &=  \langle [\theta \wedge e^{123}\wedge dx^{4567}] \rangle.\\
\end{align*}

Let $I$ be the set of indexes $\{1,2,3,4\}$. The minimal model of $M_f$ is provided by $(\bigwedge V,d,\varphi)$, where
\[
  \bigwedge V= \bigwedge (a) \otimes \bigwedge_{i\in I} (b_i) \otimes \bigwedge_{(i,j)\in I \times I-(1,4)}(c,\lambda_{ij}),  
\]
with degrees $\abs{a}=1$, \ $\abs{b_i}=2$, \ $\abs{c}=\abs{\lambda_{ij}}=3$ and differential
\[
da=db_i=dc=0, \ \ d\lambda_{ij}=b_i\wedge b_j.
\]
Note that all the wedge product $b_i \wedge b_j$ are exact, except for $b_1 \wedge b_4$. \\
The quasi isomorphism $\varphi: (\bigwedge V, d) \to (\Omega^*(M_f),d) $ is defined as follows:
\[
\begin{split}
&\varphi(a)=\theta, \ \ \varphi(b_1)=dx^{45}, \ \ \varphi(b_2)=dx^{46}+dx^{57}, \\
&\varphi(b_3)=dx^{47}-dx^{56}, \ \ \varphi(b_4)=dx^{67},\\
&\varphi(c)=e^{123}, \ \ 
\varphi(\lambda_{ij})=0.
\end{split}
\]
Observe that under the identification provided by $\varphi$, the cohomology of $(\bigwedge V,d)$ is precisely that of $M_f$.\\
We claim $M_f$ to be formal. Indeed, if we define a quasi isomorphism $\nu:(\bigwedge V,d) \to \left(H^*(\bigwedge V,d),0\right)$ on the generators of $\bigwedge V$ to be 
\begin{align*}
       &\nu(a)=[a], \ \ 
       \nu(b_i) =[b_i], \\
       &\nu(c)= [c], \ \ 
       \nu(\lambda_{ij})= 0,\\
\end{align*}
then the induced map in cohomology is the identity. The formality follows.\\
Now we compute the Dolbeault cohomologies of $M_f$, by using the Borel spectral sequences described in Theorem \ref{theorem:holfib}.  
As $\mathbb{T}^3_\rho$ is an Inoue surface in the family $S_M$, $\mathbb{T}^3_\rho$ is endowed with the following basis of $(1,0)$ forms
\begin{align*}
    & \varphi^1_+=e^2+ ie^3, \ \ \ \ \varphi^1_-=e^3+ie^2, \\
    & \varphi^2_\pm=e^1 + i \theta, \\
\end{align*}
whose differentials satisfy
\begin{align*}
    & d\varphi^1_\pm= \frac{\alpha - i \beta_\pm}{2i} \  \varphi^{12} - \frac{\alpha - i \beta_\pm}{2i} \ \varphi^{1\bar{2}}, \\
    & d\varphi^2_\pm=- i \alpha \ \varphi^{2\bar{2}},
\end{align*}
where $\alpha=-\frac{1}{2}$ and $\beta_\pm=\pm \ p$.\\
Then:
\begin{equation} 
H^{\bullet, \bullet}_{\bar{\partial}_\pm} (\mathbb{T}^3_\rho) = \mathbb{C}\langle 1 \rangle  \oplus \mathbb{C}\langle [\varphi_\pm^{\bar{2}}] \rangle \oplus \mathbb{C}\langle [\varphi_\pm^{12\bar{1}}] \rangle \oplus \mathbb{C}\langle [\varphi_\pm^{12\bar{1}\bar{2}}] \rangle.
\end{equation}
Let us fix the 
invariant coframe $\{\eta^1_{i},\eta^2_{i}\}$ of $T^{1,0} _i\mathbb{T}^4$ provided by
\begin{align*}
    &\eta^1_{1}=dx^4+ i dx^5, \ \ \  \eta^2_{1}=dx^7+idx^6, \\
    &\eta^1_{2}=dx^5+ i dx^6, \ \ \ \eta^2_{2}=dx^7+idx^4, \\
\end{align*}
with structure equations $d\eta^1_i=0$ and $d\eta^2_i=0$. Then:
\begin{align*}
   H^{\bullet,\bullet}_i(\mathbb{T}^4)&= \mathbb{C}\langle 1 \rangle  \oplus \mathbb{C}\langle [\eta_i^{k}] \rangle
   \oplus \mathbb{C}\langle [\eta_i^{\bar{k}}]\rangle 
   \oplus \mathbb{C}\langle [\eta_i^{12}] \rangle \oplus \mathbb{C}\langle [\eta_i^{\overline{12}}] \rangle \\
   &
   \oplus \mathbb{C}\langle [\eta_i^{k \overline{h}}] \rangle \oplus \mathbb{C} \langle [\eta_i^{12\overline{k}}] \rangle \oplus \mathbb{C}\langle [\eta_i^{k \overline{12}}] \rangle \oplus \mathbb{C}\langle [\eta_i^{12 \overline{12}}] \rangle, \\
\end{align*}
where $k=1,2$.\\
By Theorem \ref{theorem:holfib}, for $p+q=u+v$ 
    \[
    ^{p,q}E_2^{u,v  \pm }=\sum_{k} H^{k,u-k}_{\overline{\partial}_\pm}(\mathbb{T}^3_\rho) \otimes H^{p-k,q-u+k}_{\overline{\partial}_i}(\mathbb{T}^4),
    \] 
with $d_2:\ ^{p,q}E_2^{u,v}\mapsto \ ^{p,q+1}E_2^{u+2,v-1}$. We claim that the two Borel spectral sequences degenerate at $r=2$. The proof that $d_2$ is zero is trivial except for $(p,q,u,v)=(2,1,1,2)$ and $(p,q,u,v)=(2,2,1,3)$. Let us investigate the first case. Let $(p,q,u,v)=(2,1,1,2)$. The differential $d_2$ maps $^{2,1}E_2^{1,2}=H^{0,1}_\pm (\mathbb{T}^3_\rho)\otimes H^{2,0}_i(\mathbb{T}^4)=\langle [\varphi_\pm^{\overline{2}}] \otimes  [\eta_i^{12}] \rangle $ in $^{2,2}E_2^{3,1}=H^{2,1}_\pm (\mathbb{T}^3_\rho)\otimes H^{0,1}_i(\mathbb{T}^4)=\langle [\varphi_\pm^{12\overline{1}}] \otimes  [\eta_i^{\overline{k}}]\rangle$.\\
Recall that $d_2$ is precisely the debar operator $\overline{\partial}_\pm$ of $M_f$. Hence, since the fibration $p:M_f \to \mathbb{T}^3_\rho$ is holomorphic, $d_2$ acts on $^{p,q}E_2^{u,v}$ as $id \otimes d_2$. We get
\[
  d_2(^{2,1}E_2^{1,2})=H^{0,1}_\pm (\mathbb{T}^3_\rho)\otimes d_2(H^{2,0}_i(\mathbb{T}^4))=H^{0,1}_\pm (\mathbb{T}^3_\rho)\otimes d_2(^{2,0}E_{2}^{0,2\pm})=0.
\]
The proof of the second case proceeds in the same way. 
\end{example}
As already observed, the previous examples are both diffeomorphic to solvmanifolds. A class of examples not diffeomorphic to solvmanifolds can be constructed  by taking  a $K3$  surface 
as the (hyper)K\"ahler manifold $N$. Indeed, in this latter case by Theorem \ref{theorem:holfib} we have that the holomorphic fibration
\[
   N \to M_f \to \mathbb{T}^3_\rho
\]
induces the long exact sequence of homotopy groups
\[
 \dots \to \pi_{k+1}(\mathbb{T}^3_\rho) \to \pi_{k} (N) \to \pi_k(M_f) \to \pi_{k}(\mathbb{T}^3_\rho) \to \dots. 
\]
If we focus on $k=1$ we get
\begin{equation} \label{eqn:homotopy}
   \dots \to \pi_{2}(\mathbb{T}^3_\rho) \to \pi_{1} (N) \to \pi_1(M_f) \to \pi_{1}(\mathbb{T}^3_\rho) \to \pi_{0}(N) \dots .
\end{equation}
Observe that \eqref{eqn:homotopy} reduces to the short exact sequence
\[
  0 \to \pi_{1} (N) \to \pi_1(M_f) \to \pi_{1}(\mathbb{T}^3_\rho) \to 0
\]
as the Inoue surfaces in the family $S_M$ are solvmanifolds (and hence in particular they are aspherical) and $N$ is a path connected manifold. Since by assumption $N$ is a $K3$ surface, $\pi_{1} (N)$ is trivial and 
\[
   \pi_1(M_f) \cong \pi_{1}(\mathbb{T}^3_\rho).
\]
By contradiction, let us assume that $M_f$ is a solvmanifold. Since $\mathbb{T}^3_\rho$ is another solvmanifold with isomorphic fundamental group, $M_f$ is diffeomorphic to $\mathbb{T}^3_\rho$. Clearly, this is an absurd as $\dim(\mathbb{T}^3_\rho)=4<\dim(M_f)$.

\smallskip

Diffeomorphisms of  $K3$ surfaces preserving  K\"ahler structures have been described for instance in \cite[Theorem 1.8]{FL}.

\smallskip
\textbf{Acknowledgements}. 
Anna Fino is partially supported by Project PRIN 2017 “Real and complex manifolds: Topology, Geometry and Holomorphic Dynamics”, by GNSAGA (Indam) and by a grant from the Simons Foundation (\#944448). 
We would like to thank Giovanni Bazzoni and Gueo Grantcharov for many useful comments and suggestions.
\smallskip

\end{document}